\newtheorem{theorem}{Theorem}[section]
\newtheorem{proposition}[theorem]{Proposition}
\newtheorem{corollary}[theorem]{Corollary}
\theoremstyle{definition}
\newtheorem{definition}[theorem]{Definition}
\newtheorem{example}[theorem]{Example}
\newtheorem{problem}{Problem}
\newtheorem{remark}[theorem]{Remark}
\begin{document}

\title{Mixing properties of tree-shifts}

\keywords{Multidimensional symbolic dynamics; tree-shift; topological mixing; strongly irreducible; emptiness problem; extensibility problem}
\subjclass{Primary 37B10, 37B50}

\author[Jung-Chao Ban]{Jung-Chao Ban}
\address[Jung-Chao Ban]{Department of Applied Mathematics, National Dong Hwa University, Hualien 970003, Taiwan, ROC.}
\email{jcban@mail.ndhu.edu.tw}

\author[Chih-Hung Chang]{Chih-Hung Chang*}
\address[Chih-Hung Chang]{Department of Applied Mathematics, National University of Kaohsiung, Kaohsiung 81148, Taiwan, ROC.}
\email{chchang@nuk.edu.tw}

\date{October 13, 2016}

\thanks{*Correspondence to whom should be addressed.}
\thanks{This work is partially supported by the Ministry of Science and Technology, ROC (Contract No MOST 105-2115-M-390 -001 -MY2).}

\baselineskip=1.2\baselineskip

\begin{abstract}
The uncertainty of multidimensional shift spaces draws attracted attention of researchers. For example, the emptiness problem is undecidable; there exist aperiodic shifts of finite type; there is a nonempty shift of finite type exhibiting nonextensible local patterns. This paper investigates symbolic dynamics on Cayley trees and gives affirmative answers to the above questions in tree-shifts. Beyond that, with introducing block gluing tree-shift, a sufficient condition of exhibiting positive topological entropy is revealed.
\end{abstract}

\maketitle

\section{Introduction} \label{sec:intro}

There have been many researches about chaotic systems, such as the strange attractor in the Lorenz system, period doubling in quadratic maps, and Julia sets in complex-valued functions, over the past few decades. For most systems, the theoretical analysis of the chaotic behavior is difficult; one of the most frequently used techniques is transferring the original system to a conjugate or semiconjugate symbolic dynamical system and then investigating the chaotic behavior in symbolic dynamics (see \cite{Dev-1987} and the references therein).

A shift space is a set consisting of right-infinite or bi-infinite words which avoid those finite words in a so-called \emph{forbidden set} $\mathcal{F}$. Such a shift space is denoted by $\mathsf{X}_{\mathcal{F}}$. A shift space $\mathsf{X}_{\mathcal{F}}$ is called a shift of finite type (SFT) whenever $\mathcal{F}$ is finite. Shifts of finite type are fundamental and play an important role in symbolic dynamical systems. Roughly speaking, a shift of finite type is a set of right-infinite or bi-infinite paths in a finite directed graph. Moreover, investigating the graph representation of a shift of finite type reveals some important properties such as irreducibility, mixing, and spatial chaos (see \cite{Kit-1998, LM-1995}).

Suppose that a forbidden set $\mathcal{F}$ is given. It comes immediately to mind plenty questions. The following are some frequently asked ones.
\begin{enumerate}[(a)]
\item Does there exist an algorithm to determine if $\mathsf{X}_{\mathcal{F}}$ is empty?
\item Does every nonempty shift space contain a periodic point? Under what condition are periodic points dense?
\item Is every local pattern extensible?
\end{enumerate}
The answers to the above questions are affirmative, whenever we focus on SFTs. A SFT is nonempty if its associative graph representation is essential. Every nonempty SFT contains periodic points and an irreducible SFT has dense periodic points. Furthermore, every local pattern can extend globally. Nevertheless, these questions receive opposite results when investigating multidimensional shift spaces.

For two-dimensional SFTs, the emptiness problem is undecidable; that is, there is no algorithm for determining if $\mathsf{X}_{\mathcal{F}}$ is empty for a given forbidden set. There is an aperiodic SFT which has positive topological entropy, and there is a nonempty SFT which exhibits nonextensible local patterns. These results reveal the uncertainty of multidimensional shift spaces and attracted attention of researchers (cf.~\cite{Berger-MAMS1966, CulikII-DM1996, GK-SMJ1972, Kari-DM1996, Robinson-IM1971, Schmidt-2001}). Sharma and Kumar \cite{SK-2016} have demonstrated the necessary and sufficient condition for determining if a multidimensional SFT is empty; furthermore, they address a sufficient condition for multidimensional SFTs exhibiting periodic points.

While every one-dimensional mixing SFT or sofic shift of positive topological entropy contains a tremendous collection of pairwise disjoint subsystems, Desai \cite{Des-IM2006} has shown that a general multidimensional sofic shift $X$ of positive entropy $h(X)$ still contains subshifts achieving all entropies in the interval $[0, h(X)]$. In \cite{BPS-TAMS2010}, Boyle \emph{et al.}~demonstrate that these subshifts may be poorly separated; the same phenomenon is also observed in multidimensional SFTs. They also introduce a mixing condition known as \emph{block gluing} and distinguish several different mixing conditions such as \emph{uniform filling} and \emph{strongly irreducible}. In addition, they show that every two-dimensional block gluing SFT has dense periodic points; however, the denseness of periodic points in general multidimensional SFTs remains to be open. For more details about the recent works in multidimensional shift spaces, the reader is referred to \cite{BHL+-2015, BFM-IJAC2005, BPS-TAMS2010, Briceno-ETDS2016, JM-ETDS2005, Lightwood-ETDS2003, MP-1979, MP-PLMS1981, MP-2016, PS-TAMS2015, Schraudner-DCDS2010, Ward-IMN1994} and the references therein.

Topological entropy, which represents the spatial growth rate of number of points (patterns), is one of the most important quantities which reflects the complexity of a dynamical system. The collection of the topological entropies of one-dimensional SFTs is known as the set of Perron numbers (together with a logarithmic function). Hochman and Meyerovitch \cite{HM-AoM2010} have indicated that the topological entropy of a multidimensional SFT is right recursively enumerable; roughly speaking, it is the infimum of a monotonic recursive sequence of rational numbers. Hochman extends the result to the multidimensional effective dynamical systems \cite{Hochm-IM2009}. Pavlov and Schraudner \cite{PS-JdM2015} show that, for every $d \geq 3$ and every $\mathbb{Z}^d$ full shift, there is a block gluing $\mathbb{Z}^d$ SFT which shares identical topological entropy. Although there exist multidimensional SFTs with arbitrary high topological entropy, which do not factor onto any full shift, a block gluing shift space factors onto every shift space of strictly lower entropy under some conditions; furthermore, every nontrivial block gluing shift space is of positive entropy (cf.~\cite{BPS-TAMS2010, Briceno-ETDS2016, MP-2016}).

One of the reasons which causes these differences between one- and multidimensional shift spaces is the spatial structure; one-dimensional lattice $\mathbb{Z}$ is a free group with one generator while multidimensional lattice $\mathbb{Z}^d$, $d \geq 2$, is an abelian group with $d$ generators and has loops itself. This motivates the investigation of symbolic dynamics on Cayley trees. In \cite{AB-TCS2012, AB-TCS2013}, the authors introduce the notion of shifts defined on Cayley trees, which are called tree-shifts. Tree-shifts have a natural structure of one-sided symbolic dynamical systems equipped with multiple shift maps. The $i$th shift map applies to a tree that gives the subtree rooted at the $i$th children of the tree. Sets of finite patterns of tree-shifts of finite type are strictly testable tree languages. Such testable tree languages are also called $k$-testable tree languages. Probabilistic $k$-testable models are used for pattern classification and stochastic learning (cf.~\cite{VCC-PAMIIEEET2005}). It is remarkable that M\"{u}ller and Spandl demonstrate the existence of embedding maps from a topological dynamical system on metric Cantor space to a cellular automaton defined on Cayley graph, which preserves topological entropy \cite{MS-ETDS2009}.

The conjugacy of multidimensional shifts of finite type (also known as textile systems or tiling systems) is undecidable (see \cite{CJJ+-ETDS2003, JM-PAMS1999, LS-2002} and the references therein). Namely, there is no algorithm for determining whether two tiling systems are conjugate. Nevertheless, Williams indicates that the conjugacy of one-sided shifts of finite type is decidable \cite{Wil-AoM1973}. Aubrun and B\'{e}al extend Williams' result to tree-shifts; more precisely, they show that the conjugacy of irreducible tree-shifts of finite type are decidable \cite{AB-TCS2012}. Furthermore, Aubrun and B\'{e}al accomplish other celebrated results in tree-shifts, such as realizing tree-shifts of finite type and sofic tree-shifts via tree automata, developing an algorithm for determining whether a sofic tree-shift is a tree-shift of finite type, and the existence of irreducible sofic tree-shifts that are not the factors of tree-shifts of finite type (TSFTs). The reader is referred to \cite{AB-TCS2012, AB-TCS2013} for more details.

In \cite{BC-2015}, we show that every irreducible tree-shift of finite type and every mixing tree-shift has dense periodic points, which is an extension of the results in one-sided and multidimensional shift spaces. Such a phenomenon also reveals that the tree-shifts constitute an intermediate class in classical symbolic dynamics. In addition, we show that every TSFT is conjugate to a vertex tree-shift (defined later), which is a TSFT represented by finitely many $0$-$1$ matrices. After illustrating that computing the topological entropy of a TSFT is equivalent to the investigation of a system of nonlinear recurrence equations \cite{BC-2015a}, Akiyama \emph{et al.} indicate that the collection of topological entropies of TSFTs, like one-dimensional SFTs, is the set of Perron numbers \cite{ABC-2016}.

In this paper, we investigate the following fundamental problems in tree-shifts. Suppose that a forbidden set $\mathcal{F}$ is given.
\begin{enumerate}[(a)]
\item Does there exist an algorithm to determine if the tree-shift $\mathsf{X}_{\mathcal{F}}$ is empty?
\item Does every nonempty shift space contain a periodic point?
\item Is every local pattern extensible?
\item Under what condition is a tree-shift of positive topological entropy?
\end{enumerate}
To illustrate the above questions, we develop several mixing conditions for tree-shifts, based on the conditions discussed in \cite{BPS-TAMS2010}, such as topological mixing (TM), blocking gluing (BG), and strongly irreducible (SI) (defined later).  The main results of this paper are as follows.
\begin{enumerate}[(a)]
\item Every essential vertex tree-shift is nonempty, and vice versa. Roughly speaking, an essential vertex tree-shift is a TSFT represented by essential matrices.
\item Every essential vertex tree-shift contains periodic points.
\item Every local pattern of an essential vertex tree-shift is extensible.
\item Every nontrivial block gluing TSFT is of positive topological entropy.
\end{enumerate}
In addition, two extra mixing conditions called \emph{uniformly block gluing} (UBG) and \emph{uniformly strongly irreducible} (USI) are introduced. The relations between these mixing conditions are revealed as follows.
$$
\begin{array}{ccccc}
USI & \Rightarrow & SI &  &  \\
\Downarrow & \rotatebox{45}{$\nLeftrightarrow$} & \Downarrow &  &  \\
UBG & \Rightarrow & BG & \Rightarrow & TM
\end{array}
$$
Nevertheless, the diagram reduces to
$$
UBG (=USI) \Rightarrow BG (=SI) \Rightarrow TM
$$
whenever we restrict the discussion to TSFTs. More specifically, a TSFT is strongly irreducible (respectively uniformly strongly irreducible) if and only if it is block gluing  (respectively uniformly block gluing). This is another difference between tree-shifts and multidimensional shift spaces.

The materials of this work are organized as follows. The upcoming section addresses the background of the tree-shifts; an equivalent property of tree-shift is elucidated therein. The main results are discussed in Section \ref{sec:main-result} while Section \ref{sec:conclusion} gives the conclusion and summary of the present work. Some further questions of interest are also indicated.

\section{Definitions and Previous Results} \label{sec:preliminary}

This section recalls some basic definitions of symbolic dynamics on Cayley trees. More explicitly, the nodes of infinite trees considered in this paper have a fixed number of children and are labeled in a finite alphabet. To clarify the discussion, we focus on binary trees, but all results extend to the case of trees with $d$ children for a fixed positive integer $d$. In other words, the class of classical one-sided shift spaces is a special case in the present study.

\subsection{Basic definitions}
Let $\Sigma = \{0, 1\}$ and let $\Sigma^* = \bigcup_{n \geq 0} \Sigma^n$ be the set of words over $\Sigma$, where $\Sigma^n = \{x_1 x_2 \cdots x_n: x_i \in \Sigma \text{ for } 1 \leq i \leq n\}$ is the set of words of length $n$ for $n \in \mathbb{N}$ and $\Sigma^0 = \{\epsilon\}$ consists of the empty word $\epsilon$. An \emph{infinite tree} $t$ over a finite alphabet $\mathcal{A}$ is a function from $\Sigma^*$ to $\mathcal{A}$; a \emph{node} of an infinite tree is a word of $\Sigma^*$, and the empty word relates to the \emph{root} of the tree. Suppose $x$ is a node of a tree. Each node $xi$, $i \in \Sigma$, is known as a \emph{child} of $x$ while $x$ is the \emph{parent} of $xi$. A sequence of words $(w_k)_{1 \leq k \leq n}$ is called a \emph{path} if, for all $k \leq n-1$, $w_{k+1} = w_k i_k$ for some $i_k \in \Sigma$ and $w_1 \in \Sigma^*$. For the rest of this investigation, a tree is referred to as an infinite tree unless otherwise stated.

Let $t$ be a tree and let $x$ be a node, we refer $t_x$ to $t(x)$ for simplicity. A \emph{subtree} of a tree $t$ rooted at a node $x$ is the tree $t'$ satisfying $t'_y = t_{xy}$ for all $y \in \Sigma^*$ such that $xy$ is a node of $t$, where $xy = x_1 \cdots x_m y_1 \cdots y_n$ means the concatenation of $x = x_1 \cdots x_m$ and $y_1 \cdots y_n$. Given two words $x = x_1 x_2 \ldots x_i$ and $y = y_1 y_2 \ldots y_j$, we say that $x$ is a \emph{prefix} of $y$ if and only if $i \leq j$ and $x_k = y_k$ for $1 \leq k \leq i$. A subset of words $L \subset \Sigma^*$ is called \emph{prefix-closed} if each prefix of $L$ belongs to $L$. A function $u$ defined on a finite prefix-closed subset $L$ with codomain $\mathcal{A}$ is called a \emph{pattern}, and $L$ is called the \emph{support} of the pattern; a pattern is called a \emph{block of height $n$} (or an \emph{$n$-block}) if its support $L = x \Sigma_{n-1}$ for some $x \in \Sigma^*$, where $\Sigma_i = \bigcup\limits_{0 \leq k \leq i} \Sigma^k$.

Suppose that $u$ is a pattern and $t$ is a tree. Let $S(u)$ denote the support of $u$. We say that $u$ is accepted by $t$ if there exists $x \in \Sigma^*$ such that $u_y = t_{xy}$ for every node $y \in S(u)$. In this case, we say that $u$ is a pattern of $t$ rooted at the node $x$. A tree $t$ is said to \emph{avoid} $u$ if $u$ is not accepted by $t$; otherwise, $u$ is called an \emph{allowed pattern} of $t$.

We denote by $\mathcal{T}$ (or $\mathcal{A}^{\Sigma^*}$) the set of all infinite trees on $\mathcal{A}$. For $i \in \Sigma$, the shift transformations $\sigma_i$ from $\mathcal{T}$ to itself are defined as follows. For every tree $t \in \mathcal{T}$, $\sigma_i(t)$ is the tree rooted at the $i$th child of $t$, that is, $(\sigma_i(t))_x = t_{ix}$ for all $x \in \Sigma^*$. For the purpose of simplification of the notation, we omit the parentheses and denote $\sigma_i(t)$ by $\sigma_i t$. The set $\mathcal{T}$ equipped with the shift transformations $\sigma_i$ is called the \emph{full tree-shift} of infinite trees over $\mathcal{A}$. Suppose $w = w_1 \cdots w_n \in \Sigma^*$. Define $\sigma_w = \sigma_{w_n} \circ \sigma_{w_{n-1}} \circ \cdots \circ \sigma_{w_1}$; it follows immediately that $(\sigma_w t)_x = t_{wx}$ for all $x \in \Sigma^*$.

Given a collection of patterns $\mathcal{F}$, let $\mathsf{X}_{\mathcal{F}}$ denote the set of trees avoiding any element of $\mathcal{F}$. A subset $X \subseteq \mathcal{T}$ is called a \emph{tree-shift} if $X = \mathsf{X}_{\mathcal{F}}$ for some $\mathcal{F}$. We say that $\mathcal{F}$ is \emph{a set of forbidden patterns} (or \emph{a forbidden set}) of $X$. A straightforward examination suggests that a tree-shift is closed and $\sigma_w$-invariant for all $w \in \Sigma^*$. For each given set of forbidden patterns, the emptiness  problem immediately follows.

\begin{problem}[Emptiness Problem] \label{prob:empty-problem}
Given a set of finite patterns $\mathcal{F} \subset \mathcal{A}^{\Sigma^*}$, determine whether $\mathsf{X}_{\mathcal{F}} = \varnothing$.
\end{problem}

Denote the set of all blocks of height $n$ of $X$ by $B_n(X)$, and denote the set of all blocks of $X$ by $B(X)$. Suppose $u \in B_n(X)$ for some $n \geq 2$. Let $\sigma_i u$ be the block of height $n-1$ such that $(\sigma_i u)_x = u_{ix}$ for $x \in \Sigma_{n-2}$. The block $u$ is written as $u = (u_{\epsilon}, \sigma_0 u, \sigma_1 u)$.

A set of patterns $L$ is called \emph{factorial} if $u \in L$ and $v$ is a sub-pattern of $u$ implies $v \in L$. We say that $v$ is a sub-pattern of $u$ if $v$ is a subtree of $u$ rooted at some node $x$ of $u$. The set $L$ is called \emph{extensible} if for any pattern $u \in L$ with support $S(u)$, there exists a pattern $v \in L$ with support $S(v)$ such that $S(u) \subset S(v)$, $v$ coincides with $u$ on $S(u)$, and for any $x \in S(u)$, we have $xi \in S(v)$ for all $i \in \Sigma$. Notably, given a set of patterns $\mathcal{F}$, a pattern $u$ is extensible if and only if $u$ is an allowed pattern of some tree $t \in \mathsf{X}_{\mathcal{F}}$. Suppose that $\mathsf{X}_{\mathcal{F}} \neq \varnothing$. It is of interest that if every local pattern $u \notin \mathcal{F}$ extends to a global pattern, namely, a tree $t \in \mathsf{X}_{\mathcal{F}}$.

\begin{problem}[Extensibility Problem] \label{prob:extend-problem}
Let $\mathcal{F} \subset \mathcal{A}^{\Sigma^*}$ be a set of finite patterns and let $u \notin \mathcal{F}$ be a finite pattern. Does there exist $t \in \mathsf{X}_{\mathcal{F}}$ such that $u$ is accepted by $t$?
\end{problem}

For any two trees $t$ and $t'$, define
\begin{equation}
\mathrm{d}(t, t') = \left\{
             \begin{array}{ll}
               2^{-n}, & n = \min\{|x|: t_x \neq t'_x\} < \infty \hbox{;} \\
               0, & \hbox{otherwise;}
             \end{array}
           \right.
\end{equation}
herein, $|x|$ means the length of $x$. Then $\mathrm{d}$ is a metric on $\mathcal{T}$. Suppose $L$ is a factorial and extensible set of patterns. Let $\mathcal{X}(L)$ be the collection of trees whose patterns belong to $L$. Then $\mathcal{X}(L)$ is a tree-shift and $B(\mathcal{X}(L)) = L$. Conversely, if $X$ is a tree-shift, then $X = \mathcal{X}(B(X))$. This result is similar to the one known for the classical shift spaces. The reader is referred to \cite{AB-TCS2013, LM-1995} for more details.

\begin{theorem}
Let $X \subseteq \mathcal{A}^{\Sigma^*}$ be a collection of infinite trees. The following are equivalent.
\begin{enumerate}[(a)]
\item $X$ is closed and shift invariant; that is, $\sigma_w t \in X$ for all $w \in \Sigma^*$ and $t \in X$.
\item There exists a set of finite patterns $\mathcal{F}$ such that $X = \mathsf{X}_{\mathcal{F}}$.
\end{enumerate}
\end{theorem}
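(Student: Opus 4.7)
The plan is to establish the two implications separately, following the standard template for symbolic-dynamics ``shift space equals closed shift-invariant set'' theorems and adapting the argument to the tree setting via the metric $\mathrm{d}$ defined above.

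For the direction (b)$\Rightarrow$(a), I would first verify shift invariance directly: if $t$ avoids every element of $\mathcal{F}$ and $u$ were a pattern of $\sigma_w t$ rooted at some $x \in \Sigma^*$, then by the identity $(\sigma_w t)_x = t_{wx}$ the pattern $u$ would appear in $t$ rooted at $wx$, contradicting $t \in \mathsf{X}_{\mathcal{F}}$. For closedness, I would use a sequential argument: suppose $t^{(n)} \in \mathsf{X}_{\mathcal{F}}$ with $t^{(n)} \to t$, and suppose for contradiction that $t$ accepts some $u \in \mathcal{F}$ rooted at a node $x$. Since $S(u)$ is finite, there exists $N$ such that $\mathrm{d}(t^{(n)}, t) < 2^{-(|x| + \mathrm{depth}(u))}$ for all $n \geq N$, which forces $t^{(n)}$ to coincide with $t$ on $x \cdot S(u)$, so $t^{(n)}$ also accepts $u$, a contradiction.

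For the direction (a)$\Rightarrow$(b), I would define the forbidden set canonically as
\begin{equation*}
\mathcal{F} := \{u : u \text{ is a finite pattern with } u \notin B(X)\},
\end{equation*}
so that $X \subseteq \mathsf{X}_{\mathcal{F}}$ is immediate. For the reverse inclusion, fix $t \in \mathsf{X}_{\mathcal{F}}$ and, for each $n \geq 1$, consider the $n$-block $u^{(n)}$ of $t$ with support $\Sigma_{n-1}$ rooted at $\epsilon$. Since $u^{(n)} \notin \mathcal{F}$, it belongs to $B(X)$, so there is a tree $s^{(n)} \in X$ accepting $u^{(n)}$ at some node $x_n$. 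Here the key point, and the only place that requires a little care, is that ``block of $X$'' only guarantees occurrence somewhere, not at the root; this is where I use shift invariance, passing to $\sigma_{x_n} s^{(n)} \in X$, which then agrees with $t$ on all of $\Sigma_{n-1}$.

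With the approximating sequence $(\sigma_{x_n} s^{(n)})_{n \geq 1} \subset X$ in hand, $\mathrm{d}(\sigma_{x_n} s^{(n)}, t) \leq 2^{-n}$, so the sequence converges to $t$, and closedness of $X$ yields $t \in X$. The main conceptual obstacle, modest as it is, is precisely this passage from ``$u^{(n)}$ occurs somewhere in some tree of $X$'' to ``$u^{(n)}$ occurs at the root of a tree of $X$''; everything else reduces to unwinding the definitions of the metric $\mathrm{d}$ and of the support of a pattern.
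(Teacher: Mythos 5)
Your overall strategy is sound, but there is a genuine flaw as written in the direction (a)$\Rightarrow$(b): you define $\mathcal{F} := \{u : u \text{ is a finite pattern with } u \notin B(X)\}$ and claim $X \subseteq \mathsf{X}_{\mathcal{F}}$ is immediate. Under the paper's definitions this is false. The set $B(X)$ contains only \emph{blocks}, i.e., patterns whose support has the form $x\Sigma_{n-1}$; a pattern whose support is, say, $\{\epsilon, 0\}$ is prefix-closed but never a block, so it lies in your $\mathcal{F}$ no matter what $X$ is. Yet every infinite tree accepts such a pattern (restrict the tree to $\{\epsilon, 0\}$), so with your $\mathcal{F}$ one gets $\mathsf{X}_{\mathcal{F}} = \varnothing$, and the inclusion $X \subseteq \mathsf{X}_{\mathcal{F}}$ fails for every nonempty $X$. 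The repair is one line: take $\mathcal{F}$ to be the set of \emph{blocks} not in $B(X)$, or equivalently the set of finite patterns accepted by no tree of $X$. With that choice the inclusion $X \subseteq \mathsf{X}_{\mathcal{F}}$ really is immediate, and your reverse inclusion --- extracting the root block $u^{(n)} = t|_{\Sigma_{n-1}}$, noting $u^{(n)} \in B(X)$, using shift invariance to move its occurrence in $s^{(n)}$ to the root, and using closedness to conclude $t = \lim_{n \to \infty} \sigma_{x_n} s^{(n)} \in X$ --- goes through verbatim and is correct.

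Apart from this slip, your proof matches the paper's in content. Your (b)$\Rightarrow$(a) is the paper's closedness argument (you additionally verify shift invariance via $(\sigma_w t)_x = t_{wx}$, which the paper treats as not needing proof). Your (a)$\Rightarrow$(b) is a self-contained unpacking of what the paper delegates to its language machinery: the paper sets $L = \bigcup_{n \geq 1} B_n(X)$, asserts that $L$ is factorial and extensible, and quotes the equality $X = \mathcal{X}(L)$; the nontrivial half of that equality, $\mathcal{X}(L) \subseteq X$, is exactly your approximation argument combining shift invariance with closedness. So once the forbidden set is corrected, your write-up is, if anything, more explicit than the paper's own proof.
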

\begin{proof}
Suppose that $X$ is closed and shift invariant. Set $L = \bigcup\limits_{n \geq 1} B_n(X)$; the shift-invariance of $X$ infers that $L$ is factorial. The definition of $L$ indicates that $L$ is extensible and $X = \mathcal{X}(L)$. Therefore, $X$ is a tree-shift and $X = \mathsf{X}_{\mathcal{F}}$ for some $\mathcal{F}$.

Conversely, let $\mathcal{F}$ be a set of finite patterns and let $X = \mathsf{X}_{\mathcal{F}}$. It remains to show that $X$ is closed. Suppose that there exists $\{t_n\}_{n \geq 1} \subseteq X$ such that $t_n \to t$ as $n \to \infty$ and $t \notin X$. Then there is an allowed pattern $u$ of $t$ such that $u \in \mathcal{F}$. Without loss of generality, we may assume that $S(u) = \Sigma_k$ for some $k \in \mathbb{N}$. Since $\lim\limits_{n \to \infty} t_n = t$, there exists $m \in \mathbb{N}$ such that $t_m |_{\Sigma_k} = t|_{\Sigma_k}$, which leads to a contradiction. This completes the proof.
\end{proof}

Let $\mathcal{T}$ and $\mathcal{T}'$ be the full tree-shifts over finite alphabets $\mathcal{A}$ and $\mathcal{A}'$, respectively, and let $X$ be a tree-subshift of $\mathcal{T}$. (That is, $X$ is itself a tree-shift and $X \subseteq \mathcal{T}$.) A function $\phi: X \to \mathcal{T}'$ is called a \emph{sliding block code} if there exists a positive integer $m$ and a map $\Phi: B_m(X) \to \mathcal{A}'$ such that $\phi(t)_x = \Phi(u)$, the image of $m$-block of $t$ rooted at $x$ with respect to $\Phi$, for all $x \in \Sigma^*$. In this case, we denote $\phi = \Phi_{\infty}$. The local map $\Phi$ herein is called an \emph{$m$-block map}, and a \emph{block map} is a map which is an $m$-block map for some positive integer $m$.

In the theory of symbolic dynamics, the Curtis-Lyndon-Hedlund Theorem (see \cite{Hed-MST1969}) indicates that, for two shift spaces $X$ and $Y$, a map $\phi: X \to Y$ is a sliding block code if and only if $\phi$ is continuous and $\phi \circ \sigma_x = \sigma_Y \circ \phi$. A similar discussion extends to tree-shifts; in other words, $\phi$ is a sliding block code (between tree-shifts) if and only if $\phi$ is continuous and commutes with all tree-shift maps $\sigma_i$ for $i \in \Sigma$.

If a sliding block code $\phi: X \to Y$, herein $X$ and $Y$ are tree-shifts, is onto, then $\phi$ is called a \emph{factor code} from $X$ to $Y$. A tree-shift $Y$ is a \emph{factor} of $X$ if there is a factor code from $X$ onto $Y$. If $\phi$ is one-to-one, then $\phi$ is called an \emph{embedding} of $X$ into $Y$.

A sliding block code $\psi: Y \to X$ is called an \emph{inverse} of $\phi$ if $\psi(\phi(x)) = x$ for all $x \in X$ and $\phi(\psi(y)) = y$ for all $y \in Y$. In this case, we say that $\phi$ is \emph{invertible} and write $\psi = \phi^{-1}$.

\begin{definition}
A sliding block code $\phi: X \to Y$ is a \emph{conjugacy from $X$ to $Y$} if it is invertible. Two tree-shifts $X$ and $Y$ are called \emph{conjugate}, denoted by $X \cong Y$, if there is a conjugacy from $X$ to $Y$.
\end{definition}

A tree-shift $X = \mathsf{X}_{\mathcal{F}}$ is called a \emph{tree-shift of finite type} (TSFT) if the forbidden set $\mathcal{F}$ is finite; we say that $X$ is a \emph{sofic tree-shift} if $X$ is a factor of some TSFT. Given two $0$-$1$ matrices $A_0, A_1$ which are indexed by $\mathcal{A}$, the \emph{vertex tree-shift} $\mathsf{X}_{A_0, A_1}$ (corresponding to $A_0$ and $A_1$) is defined as
\begin{equation}
\mathsf{X}_{A_0, A_1} = \{t \in \mathcal{A}^{\Sigma^*}: A_0 (t_x, t_{x0}) = 1 \text{ and } A_1 (t_x, t_{x1}) = 1 \text{ for all } x \in \Sigma^*\}.
\end{equation}

It follows immediately from the definition that each vertex tree-shift is a TSFT. Our previous work demonstrates that every TSFT can be treated as a vertex tree-shift after recoding, which extends a classical result in symbolic dynamical systems.

\begin{proposition}[See \cite{BC-2015}]\label{prop:TSFT-is-vertex-shift-is-Markov}
Every tree-shift of finite type is conjugate to a vertex tree-shift.
\end{proposition}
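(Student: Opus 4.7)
The plan is to mimic the classical higher-block recoding argument, adapting it from the one-dimensional case (where one turns an SFT into an edge shift by passing to overlapping $N$-blocks) to the tree setting (where the target is a vertex tree-shift defined by two $0$-$1$ transition matrices $A_0, A_1$ acting on parent–child pairs).

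First, I would reduce to the case of a uniform-height forbidden set. Since $\mathcal{F}$ is finite, every element of $\mathcal{F}$ has support contained in $\Sigma_{N-1}$ for some $N\in\mathbb{N}$. Replacing $\mathcal{F}$ by the set $\mathcal{F}'$ of all $(N{+}1)$-blocks over $\mathcal{A}$ that contain some element of $\mathcal{F}$ as a sub-pattern, one has $\mathsf{X}_{\mathcal{F}}=\mathsf{X}_{\mathcal{F}'}$, so we may assume all forbidden patterns are $(N{+}1)$-blocks. Next, set the new alphabet $\mathcal{A}'=B_N(X)$, the set of allowed $N$-blocks of $X$, and define the matrices
\begin{equation*}
A_i(u,v)=1 \iff \sigma_i u = v\big|_{\Sigma_{N-2}} \text{ and the }(N{+}1)\text{-block }w\text{ with }w|_{\Sigma_{N-1}}=u,\ \sigma_i w=v\text{ lies in }B_{N+1}(X),
\end{equation*}
for $i\in\Sigma$ and $u,v\in\mathcal{A}'$. (The first condition is the overlap/consistency requirement; the second enforces legality of the assembled pattern.)

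Second, I would introduce the higher-block code $\beta_N\colon X\to (\mathcal{A}')^{\Sigma^*}$ by declaring $(\beta_N t)_x$ to be the $N$-block of $t$ rooted at $x$. This is continuous and satisfies $\beta_N\circ\sigma_i=\sigma_i\circ\beta_N$ for $i\in\Sigma$, hence is a sliding block code (induced by the $N$-block map $u\mapsto u$). Conversely, I would define $\alpha\colon \mathsf{X}_{A_0,A_1}\to \mathcal{A}^{\Sigma^*}$ by $\alpha(t')_x=(t'_x)_\epsilon$, which is a $1$-block map. The verification then splits into showing: (i) $\beta_N(X)\subseteq \mathsf{X}_{A_0,A_1}$, which is immediate because overlapping $N$-blocks of a tree in $X$ automatically satisfy the consistency condition and together form an allowed $(N{+}1)$-block; (ii) $\alpha(\mathsf{X}_{A_0,A_1})\subseteq X$; and (iii) $\alpha\circ\beta_N=\mathrm{id}_X$ and $\beta_N\circ\alpha=\mathrm{id}_{\mathsf{X}_{A_0,A_1}}$.

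The main obstacle, and the only step that really uses the structure of the $A_i$, is (ii): showing that $\alpha(t')$ avoids every pattern in $\mathcal{F}'$. For $t'\in\mathsf{X}_{A_0,A_1}$ and any node $x\in\Sigma^*$, the consistency clause forces the $N$-blocks $t'_x, t'_{x0}, t'_{x1}$ to be mutually compatible, so they glue to a unique $(N{+}1)$-pattern $w(x)$, and the second clause in the definition of $A_i$ guarantees $w(x)\in B_{N+1}(X)$; reading the $\epsilon$-symbols of this pattern yields precisely the $(N{+}1)$-block of $\alpha(t')$ rooted at $x$, which is therefore allowed. An inductive unfolding on the depth of nodes then shows $\alpha(t')\in X$. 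The bijection statements in (iii) are routine: $\alpha\circ\beta_N$ reads off the $\epsilon$-entry of each $N$-block, recovering $t$; and $\beta_N\circ\alpha$ reconstructs the $N$-block at each node from the $1$-block code, which coincides with the original thanks to the consistency clauses. This makes $\beta_N$ a conjugacy from $X$ to $\mathsf{X}_{A_0,A_1}$, completing the proof.
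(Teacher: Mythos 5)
The paper itself offers no proof of this proposition; it is quoted from \cite{BC-2015}, and your higher-block recoding is indeed the standard route (essentially the one used there). Your reduction to a uniform-height forbidden set, the codes $\beta_N$ and $\alpha$, step (i), and the identities in (iii) are all sound. The problem sits exactly at the step you yourself flag as the main obstacle, step (ii). First, your definition of $A_i$ is ill-posed: a pattern $w$ with $w|_{\Sigma_{N-1}}=u$ and $\sigma_i w=v$ is not a full $(N{+}1)$-block and does not determine one, since the values at the nodes $(1-i)z$ with $z\in\Sigma^{N-1}$ are unconstrained; the clause must be read existentially, i.e.\ $A_i(u,v)=1$ iff \emph{some} $w\in B_{N+1}(X)$ satisfies $w|_{\Sigma_{N-1}}=u$ and $\sigma_i w=v$. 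With that reading, your key deduction fails as stated: $A_0(t'_x,t'_{x0})=1$ produces a witness $w^{(0)}\in B_{N+1}(X)$ whose $1$-side bottom layer need not agree with that of $t'_{x1}$, and $A_1(t'_x,t'_{x1})=1$ produces a $w^{(1)}$ with the symmetric defect. The three-way gluing $w(x)$ of $t'_x,t'_{x0},t'_{x1}$ coincides with neither witness, so the assertion that ``the second clause guarantees $w(x)\in B_{N+1}(X)$'' is a non-sequitur: joint global admissibility is not a formal consequence of the two pairwise conditions. (The assertion is in fact true, but it needs an argument that your appeal to the second clause does not supply.)

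The gap is easy to repair, and the repair shows the second clause is superfluous. Since every pattern of $\mathcal{F}$ has support contained in $\Sigma_{N-1}$, an occurrence of a forbidden pattern in $\alpha(t')$ rooted at a node $y$ involves only nodes in $y\Sigma_{N-1}$, hence lies entirely inside the $N$-block of $\alpha(t')$ rooted at $y$; by your consistency induction that block equals $t'_y\in B_N(X)$, which occurs in some tree of $X$ and therefore contains no pattern of $\mathcal{F}$ --- a contradiction. Thus $\alpha(t')\in\mathsf{X}_{\mathcal{F}}=X$ follows from the overlap clause together with the global admissibility of the letters of $\mathcal{A}'=B_N(X)$ alone. (Alternatively, prove the three-way gluing directly: choose trees $s_0,s_1\in X$ carrying $t'_{x0},t'_{x1}$ at their roots and paste them below $t'_x$; the same localization of forbidden patterns, which have height at most $N$, shows the pasted tree lies in $X$, whence $w(x)\in B_{N+1}(X)$.) This localization --- every forbidden pattern sits inside a single $N$-block, so nothing can ``straddle'' the parent--child junctions --- is precisely where the finiteness of $\mathcal{F}$ and the choice of $N$ are used, and it is the point your write-up leaves unargued.
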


Proposition \ref{prop:TSFT-is-vertex-shift-is-Markov} asserts that the discussion of tree-shifts of finite type are equivalent to investigating vertex tree-shifts. For the rest of this paper, a tree-shift of finite type is referred to as a vertex tree-shift unless otherwise stated.

\subsection{Existence and denseness of periodic points}

One of the most interested problems in the investigation of shift spaces is the existence of periodic points. While, for the one-dimensional case, every shift of finite type contains periodic points and the set of periodic points is dense in an irreducible shift of finite type, there is a two-dimensional shift of finite type which contains no periodic points. Before addressing the periodic points of the tree-shifts, we recall some definitions and results first.

\begin{definition}
Let $P \subset \Sigma^*$ be a subset of words. $P$ is called a \emph{prefix set} if no word in $P$ is a prefix of another one. The length of $P$, denoted by $|P|$, is the longest word in $P$. More specifically,
$$
|P| = \left\{
             \begin{array}{ll}
               \max\{|x|: x \in P\}, & \hbox{$P$ is a finite set;} \\
               \infty, & \hbox{otherwise.}
             \end{array}
           \right.
$$
A finite prefix set $P$ is called a \emph{complete prefix code} (CPC) if any $x \in \Sigma^*$, such that $|x| \geq |P|$, has a prefix in $P$.
\end{definition}

\begin{definition}[See \cite{AB-TCS2013}]\label{def:irreducible}
A tree-shift $X$ is \emph{irreducible} if for each pair of blocks $u, v$ with $u, v \in B_n(X)$, there is a tree $t \in X$ and a complete prefix code $P \subset \bigcup_{k \geq n} \Sigma^k$ such that $u$ is a subtree of $t$ rooted at $\epsilon$ and $v$ is a subtree of $t$ rooted at $x$ for all $x \in P$.
\end{definition}

An intuitional explanation of an irreducible tree-shift is that arbitrary two patterns can connect with one another, and a CPC is a bridge which connects the designated patterns. Aubrun and B\'{e}al \cite{AB-TCS2013} demonstrate that, for any two conjugate tree-shifts $X$ and $Y$, $X$ is irreducible if and only if $Y$ is irreducible. The definition of irreducible tree-shifts seems strong. However, it is seen that such a definition is natural in the way that it extends the theory of shift spaces to tree-shifts.

\begin{theorem}[See \cite{BC-2015}]\label{thm:equiv-def-irr}
Suppose $X$ is a tree-shift. The following are equivalent.
\begin{enumerate}[\bf (i)]
\item $X$ is irreducible.
\item For each pair of blocks $u \in B_n(X), v \in B_m(X)$, where $n, m \in \mathbb{N}$, there exists $\{P_w\}_{w \in \Sigma^{n-1}}$ with $P_w$ being a complete prefix code for any $w \in \Sigma^{n-1}$ and $t \in X$ such that
$$
t|_{S(u)} = u \quad \text{and} \quad t|_{w x S(v)} = v \text{ for all } w \in \Sigma^{n-1}, x \in P_w.
$$
\item For each pair of blocks $u \in B_n(X), v \in B_m(X)$, where $n, m \in \mathbb{N}$, there exists $\{P_k\}_{1 \leq k \leq l}$ for some $l$ with $P_k$ being a complete prefix code for $1 \leq k \leq l$ and $t \in X$ such that $t|_{S(u)} = u$ and, for each $w \in \Sigma^{n-1}$,
$$
t|_{w x S(v)} = v \text{ for all } x \in P_k \text{ for some } k.
$$
\end{enumerate}
\end{theorem}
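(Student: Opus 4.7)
The plan is to prove the cycle (i) $\Rightarrow$ (ii) $\Rightarrow$ (iii) $\Rightarrow$ (i). The step (ii) $\Rightarrow$ (iii) is essentially bookkeeping: enumerate $\Sigma^{n-1}$ as $w_1,\ldots,w_{2^{n-1}}$ and take $l=2^{n-1}$ with $P_k:=P_{w_k}$, so the choice of index $k(w)=k$ for $w=w_k$ is forced.

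For (iii) $\Rightarrow$ (i), I would restrict (iii) to the case $m=n$, obtaining CPCs $P_1,\ldots,P_l$, an index map $w\mapsto k(w)$, and a tree $t\in X$ with $t|_{S(u)}=u$ and $t|_{wxS(v)}=v$ for every $w\in\Sigma^{n-1}$ and every $x\in P_{k(w)}$. Then set $P:=\bigcup_{w\in\Sigma^{n-1}}\{wx:x\in P_{k(w)}\}$, which clearly lies in $\bigcup_{k\geq n}\Sigma^{k}$. The no-proper-prefix condition on $P$ descends in two cases: two elements sharing their first $n-1$ coordinates inherit it from a common $P_{k(w)}$, while elements differing in those coordinates are already incomparable. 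For the covering property, I would split any sufficiently long word as $y=wy'$ with $|w|=n-1$ and use the CPC property of $P_{k(w)}$ to locate a prefix of $y'$ in $P_{k(w)}$, hence a prefix of $y$ in $P$. The same tree $t$ then witnesses (i).

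The substantive direction is (i) $\Rightarrow$ (ii), where the heights $n$ and $m$ may differ. I would first pad using the extensibility of $B(X)$: writing $N:=\max(n,m)$, the shorter of $u,v$ can be enlarged to a height-$N$ block of $X$ that agrees with the original on its support. Applying (i) to these same-height blocks yields a tree $t\in X$ and a CPC $P\subset\bigcup_{k\geq N}\Sigma^{k}$ realizing $u$ at the root and planting (the padded extension of) $v$ at every $x\in P$; restricting to the original supports preserves both $t|_{S(u)}=u$ and $t|_{xS(v)}=v$. Decomposing $P$ by its length-$(n-1)$ prefix, I put $P_w:=\{y\in\Sigma^{*}:wy\in P\}$ for each $w\in\Sigma^{n-1}$. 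The prefix-set and CPC properties of $P_w$ transcribe directly from those of $P$, crucially because $|P|\geq N > n-1=|w|$, so any long word prefixed by $w$ has a prefix in $P$ that must strictly extend $w$.

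The main obstacle I anticipate is precisely this last decomposition step: one must ensure that the padding does not disturb the conclusions on the original (unpadded) supports and, simultaneously, that every single $w\in\Sigma^{n-1}$ receives a nonempty CPC $P_w$. Both points rest on the strict inequality $|P|>|w|$ enforced by the padding, together with a careful reading of the extensibility of $B(X)$ guaranteed by the fact that $X$ is a tree-shift. The remaining steps are routine manipulations of complete prefix codes and shift-invariance of $X$.
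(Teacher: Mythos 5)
First, a caveat about the comparison itself: the paper does not prove this theorem---it is quoted from \cite{BC-2015} without proof---so there is no in-paper argument to measure yours against. Judged on its own merits, your architecture is sound and is the natural one: the cycle (i) $\Rightarrow$ (ii) $\Rightarrow$ (iii) $\Rightarrow$ (i), with the two key devices being (a) padding the two blocks to a common height $N=\max(n,m)$ before invoking irreducibility, and (b) translating between a single code $P\subset\bigcup_{k\geq N}\Sigma^{k}$ and a family $\{P_w\}_{w\in\Sigma^{n-1}}$ via $P_w=\{y: wy\in P\}$ and, conversely, $P=\bigcup_{w}wP_{k(w)}$. Your (i) $\Rightarrow$ (ii) is complete: since every element of $P$ has length at least $N\geq n>n-1$, each $P_w$ is a nonempty finite prefix set, and its completeness follows by extending a word, pulling a prefix out of $P$, and observing it must strictly extend $w$; (ii) $\Leftrightarrow$ (iii) is indeed bookkeeping.

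The one step that does not hold as written is the claim in (iii) $\Rightarrow$ (i) that $P:=\bigcup_{w\in\Sigma^{n-1}}\{wx: x\in P_{k(w)}\}$ ``clearly lies in $\bigcup_{k\geq n}\Sigma^{k}$.'' Statement (iii) imposes no lower bound on the lengths of words in the codes $P_k$, and under the paper's literal definition the singleton $\{\epsilon\}$ \emph{is} a complete prefix code (it is a prefix set of length $0$, and every word has $\epsilon$ as a prefix). If some $P_{k(w)}=\{\epsilon\}$---i.e., the witness tree simply carries $v$ rooted at the leaf $w$ of $u$, which is consistent whenever $u_w=v_{\epsilon}$---then $w\in P$ has length $n-1$, and $P$ violates the requirement $P\subset\bigcup_{k\geq n}\Sigma^{k}$ of Definition~\ref{def:irreducible}. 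Since you must handle every witness of (iii), not witnesses of your choosing, this case needs an argument; note also that you cannot recover deeper copies of $v$ below such a $w$ from the given data. The repair is exactly the padding trick you already use in (i) $\Rightarrow$ (ii): apply (iii) to the pair $(\tilde{u},v)$ with $\tilde{u}\in B_{n+1}(X)$ any extension of $u$, so the assembled code consists of words $wx$ with $|w|=n$ and hence has all lengths at least $n$, even when $x=\epsilon$. Alternatively, one may adopt the convention that a complete prefix code consists of nonempty words---a convention the paper implicitly relies on anyway, since Definition~\ref{def:periodic-tree} and the proof of Theorem~\ref{thm:aperiodic-TSFT} both fail if $P=\{\epsilon\}$ is admitted---under which your proof is complete as written.
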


Theorem \ref{thm:equiv-def-irr} reveals that Definition \ref{def:irreducible} is natural for tree-shifts and can extend to the definition of \emph{mixing tree-shifts} as one-dimensional symbolic dynamics do. More specifically, the main difference between irreducible and mixing tree-shifts is whether the CPC depends on the given patterns we want to connect together. For more details, the reader is referred to \cite{AB-TCS2013, BC-2015}.

Similar to the definition of irreducibility, a periodic point in a tree-shift is defined as follows.

\begin{definition}\label{def:periodic-tree}
Let $X$ be a tree-shift. An infinite tree $t \in X$ is \emph{periodic} if there is a complete prefix code $P$ such that $\sigma_x t=t$ for all $x \in P$, where $\sigma_x =\sigma_{x_k} \circ \sigma_{x_{k-1}} \circ \ldots \circ \sigma_{x_1}$ for $x=x_1 \ldots x_k$.
\end{definition}

\begin{theorem}[See \cite{BC-2015}]\label{thm:irr-TSFT-Dense-Periodic-Points}
Suppose that $X$ is a tree-shift. Then the periodic points of $X$ are dense in $X$ if $X$ is an irreducible tree-shift of finite type or $X$ is a mixing tree-shift.
\end{theorem}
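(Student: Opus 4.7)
The plan is to establish density of periodic points via approximation: given $t\in X$ and $n\in\mathbb{N}$, produce a periodic tree $t^{\star}\in X$ with $t^{\star}|_{\Sigma_{n-1}} = t|_{\Sigma_{n-1}}$, which forces $\mathrm{d}(t,t^{\star}) \leq 2^{-n}$. Writing $u = t|_{\Sigma_{n-1}} \in B_{n}(X)$, the task reduces to splicing $u$ back onto itself indefinitely in a way consistent with the shift space.

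For the irreducible TSFT case I would invoke Theorem~\ref{thm:equiv-def-irr}(ii) applied to the pair $(u,u)$: this yields a tree $s\in X$ and complete prefix codes $\{P_{w}\}_{w\in\Sigma^{n-1}}$ with $s|_{S(u)}=u$ and $s|_{wxS(u)}=u$ for every $w\in\Sigma^{n-1}$ and $x\in P_{w}$. Set $Q = \{wx : w\in\Sigma^{n-1},\ x\in P_{w}\}$. A short check shows $Q$ is itself a complete prefix code with $|q|\geq n$ for every $q\in Q$. I would then define $t^{\star}$ by the recursion $t^{\star}_{z}=s_{z}$ whenever no prefix of $z$ lies in $Q$, and $t^{\star}_{qy}=t^{\star}_{y}$ whenever $z=qy$ with $q\in Q$. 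Induction on $|z|$ makes this well-defined; $\sigma_{q}t^{\star}=t^{\star}$ for every $q\in Q$ by construction; and since no element of $Q$ is short enough to be a prefix of a word in $\Sigma_{n-1}$, we have $t^{\star}|_{\Sigma_{n-1}} = s|_{\Sigma_{n-1}} = u$.

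The main obstacle is verifying $t^{\star}\in X$. After passing to a vertex tree-shift via Proposition~\ref{prop:TSFT-is-vertex-shift-is-Markov}, membership reduces to checking $A_{i}(t^{\star}_{z},t^{\star}_{zi})=1$ at every node. Transitions lying entirely in the ``unwrapped'' part of the tree coincide with those of $s$ and are automatic. At a glue point $q\in Q$ the new subtree has root label $t^{\star}_{q}=u_{\epsilon}$, and compatibility with its parent in $t^{\star}$ holds because $s_{q}=u_{\epsilon}$ as well and $s\in X$, so the required transition is already witnessed by $s$. For the mixing case the same recipe applies: the mixing hypothesis furnishes complete prefix codes whose shape is uniform across pairs of $n$-blocks, so the splicing goes through. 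The added subtlety is that without a finite forbidden set each finite sub-pattern of $t^{\star}$ must be seen to lie in $B(X)$; I would handle this by partitioning such a sub-pattern into pieces that sit inside translates of $s$ together with pieces straddling glue points, and reapplying the mixing hypothesis to the latter.
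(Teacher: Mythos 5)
Since the paper states this theorem with a citation to \cite{BC-2015} and gives no proof of its own, there is no in-paper argument to compare against; judging your proposal on its merits, the two halves fare very differently. The irreducible TSFT half is essentially correct and is the natural argument: apply Theorem~\ref{thm:equiv-def-irr}(ii) to $(u,u)$, periodize the witness $s$ along $Q=\{wx: w\in\Sigma^{n-1},\ x\in P_w\}$, and verify membership transition-by-transition in the vertex-shift representation, where the induction on $|z|$ (strip a $Q$-prefix, stay in the unwrapped region, or cross a glue point, the crossing being witnessed by $s$ since $t^{\star}_{q'}=s_{q'}$ and $t^{\star}_q=u_{\epsilon}=s_q$) closes the argument. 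Two small patches are needed there: passing to a vertex tree-shift via Proposition~\ref{prop:TSFT-is-vertex-shift-is-Markov} requires knowing that conjugacy preserves irreducibility (Aubrun--B\'{e}al, as cited in the paper) and that a conjugacy carries periodic points to periodic points (sliding block codes commute with every $\sigma_w$), so that density transfers back to the original TSFT.

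The mixing case, however, has a genuine gap, and it is exactly the hard part of the theorem. For a tree-shift that is not of finite type, you cannot certify $t^{\star}\in X$ by partitioning its patterns into pieces and checking each piece: the implication ``every piece of a pattern is allowed, hence the pattern is allowed'' is precisely what finite type buys, and it is unavailable here. Concretely, your $t^{\star}$ repeats the \emph{same} filler of the fixed witness $s$ at every level of the self-similar structure, so a pattern of $t^{\star}$ straddling two or more glue levels contains that filler at several depths; reapplying the mixing hypothesis to such a piece only produces \emph{some} tree of $X$ containing the two glued blocks with its own, uncontrolled filler, and there is no reason that filler reproduces the one inherited from $s$. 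Thus no application of the hypothesis ever exhibits the specific straddling patterns of $t^{\star}$ inside a point of $X$, and $t^{\star}\in X$ is never established. The failure of the method is already visible in dimension one (take $d=1$, where tree-shifts are one-sided subshifts and complete prefix codes are singletons $\{0^k\}$): there your recipe reduces to ``if $uwu\in B(X)$ then the periodization $(uw)^{\infty}$ lies in $X$,'' which is false outside finite type --- in any aperiodic minimal subshift every word $u$ recurs, so $uwu\in B(X)$ for some $w$, yet the subshift has no periodic points at all. (Such examples are not mixing, so they do not contradict the theorem; they show that the verification step your argument rests on is invalid without finite type.) A correct treatment of the mixing case must produce, for every depth $m$, a single global witness in $X$ containing the entire depth-$m$ nesting with \emph{consistent} fillers --- for instance by iterated gluing combined with a pigeonhole or K\"{o}nig-lemma selection among the finitely many possible fillers on the bounded gluing region --- and your sketch does not supply any mechanism of this kind.
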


Theorem \ref{thm:irr-TSFT-Dense-Periodic-Points} reveals that the periodic points are dense in a tree-shift provided it is either an irreducible TSFT or a mixing tree-shift. Such a result illustrates that tree-shifts are different from one-dimensional and multidimensional shift spaces due to the sufficient condition for the denseness of periodic points. Remarkably, every one-dimensional SFT contains periodic points while there exists an aperiodic two-dimensional SFT (cf.~\cite{BPS-TAMS2010}). It is of interest that if there exists an aperiodic TSFT.

\begin{problem}[Existence of Periodic Point]\label{prob:existence-periodic-point}
Does there exist an aperiodic tree-shift of finite type?
\end{problem}

One of the indicators which reflects the complexity of a dynamical system is topological entropy. The topological entropy of a multidimensional shift space is defined as the growth rate of the number of possible patterns with respect to the lattices. For a tree-shift $X$, we define its topological entropy as follows.

\begin{definition}
Suppose that $X$ is a tree-shift. The \emph{topological entropy} $h(X)$ of $X$ is defined as
\begin{equation}\label{eq:entropy-definition}
h(X) = \lim_{n \to \infty} \dfrac{\ln^2 |B_n(X)|}{n},
\end{equation}
where $\ln^2 = \ln \circ \ln$ and $|B_n(X)|$ is the cardinality of $B_n(X)$.
\end{definition}

The existence of the limit in \eqref{eq:entropy-definition} is demonstrated in \cite{ABC-2016, BC-2015a}. An immediate question then follows.

\begin{problem}[Postiivity of Topological Entropy]\label{prob:positive-entropy}
When is a tree-shift of positive topological entropy?
\end{problem}

\section{Main Results} \label{sec:main-result}

This section is devoted to the main results of this paper. After making inquiries about the emptiness and extensibility problems, the relations between different types of mixing properties are examined. The following elucidation focuses on binary tree-shifts (i.e., $|\Sigma| = 2$) to simplify the discussion, and can extend to general $d$-ary tree-shifts without difficulty.

We start with the emptiness problem for tree-shifts of finite type. Proposition \ref{prop:TSFT-is-vertex-shift-is-Markov} demonstrates that every tree-shift of finite type over $\mathcal{A}$ is a vertex tree-shift $\mathsf{X}_{A_0, A_1}$ for some binary matrices $A_0, A_1$ after recoding. Let $G_i = (V_i, E_i)$ be the graph representation of $A_i$ for $i = 0, 1$. Without loss of generality, we may assume that $A_0$ and $A_1$ are of the same dimension; it follows from the definition of vertex tree-shifts that $V_0 = V_1 = \mathcal{A}$.

A matrix is called \emph{essential} if it contains no zero rows. We say that $A_0$ and $A_1$ contain essential submatrices simultaneously if and only if there exist a permutation matrix $P$ and a natural number $k$ such that the first $k \times k$ blocks of $P^{-1} A_0 P$ and $P^{-1} A_1 P$ are both essential. More precisely, $B_0$ and $B_1$ are both essential matrices, where
$$
B_i (p, q) = (P^{-1} A_i P)(p, q) \quad \text{for} \quad 1 \leq p, q \leq k, i = 0, 1.
$$
Let $G = (V, E)$ be a graph. A vertex $v \in V$ is called \emph{stranded} if it is a sink; that is, a stranded vertex is a vertex which is not an initial state of any edge $e \in E$. A graph $G$ is called essential if it has no stranded vertices. A straightforward examination asserts that the graph representation of an essential matrix is essential, and vice versa. Given $V' \subset V$, a subgraph $G' = (V', E')$ of $G$ reduced by $V'$ is defined as $e \in E'$ if and only if $e \in E$ and the initial and terminal states of $e$ are both in $V'$. We denote such a subgraph by $G' = G|_{V'}$.

Theorem \ref{thm:emptiness} gives the emptiness problem (Problem \ref{prob:empty-problem}) an affirmative answer.

\begin{theorem}\label{thm:emptiness}
The following statements are equivalent.
\begin{enumerate}[\bf (i)]
\item $\mathsf{X}_{A_0, A_1} \neq \varnothing$.
\item $A_0$ and $A_1$ contain essential $k \times k$ submatrices simultaneously for some $k \in \mathbb{N}$.
\item There exists $V \subseteq V_0 = V_1$ such that $G_0|_V$ and $G_1|_V$ are essential graphs.
\end{enumerate}
\end{theorem}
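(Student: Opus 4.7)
The plan is to verify the cycle $(iii) \Rightarrow (i) \Rightarrow (iii)$ and then observe that $(ii)$ is only a matrix-theoretic rephrasing of $(iii)$. The equivalence $(ii) \Leftrightarrow (iii)$ is essentially bookkeeping: the permutation $P$ simultaneously reorders the shared index set $V_0 = V_1 = \mathcal{A}$, so asking for a common $k \times k$ essential top-left block of $P^{-1}A_0P$ and $P^{-1}A_1P$ is the same as choosing a common subset $V \subseteq \mathcal{A}$ of size $k$ for which every row of $A_i|_{V \times V}$ is nonzero, which in graph terms says that $G_0|_V$ and $G_1|_V$ have no stranded vertices.

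For $(i) \Rightarrow (iii)$, I would take any $t \in \mathsf{X}_{A_0,A_1}$ and set
\[
V = \{t_x : x \in \Sigma^*\} \subseteq \mathcal{A}.
\]
For each $a \in V$, pick some $x \in \Sigma^*$ with $t_x = a$; by the defining constraint of $\mathsf{X}_{A_0,A_1}$, we have $A_i(t_x, t_{xi}) = 1$ for $i = 0,1$, and $t_{x0}, t_{x1} \in V$ by construction. Hence each vertex of $G_i|_V$ has an outgoing edge inside $V$, so $G_0|_V$ and $G_1|_V$ are essential.

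For $(iii) \Rightarrow (i)$, I would use $V$ to construct a tree explicitly. Since $G_0|_V$ and $G_1|_V$ are essential, for every $v \in V$ I can choose $\phi_0(v), \phi_1(v) \in V$ with $A_0(v, \phi_0(v)) = A_1(v, \phi_1(v)) = 1$. Fix any $v_0 \in V$ and define $t \in \mathcal{A}^{\Sigma^*}$ recursively on $|x|$: set $t_\epsilon = v_0$, and once $t_x$ is defined, put $t_{xi} = \phi_i(t_x)$ for $i \in \Sigma$. An immediate induction shows $t_x \in V$ and $A_i(t_x, t_{xi}) = 1$ for all $x$ and $i$, so $t \in \mathsf{X}_{A_0,A_1}$ and $\mathsf{X}_{A_0,A_1} \neq \varnothing$.

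There is no serious obstacle here: the only subtlety is making sure that the set $V$ produced in $(i) \Rightarrow (iii)$ is truly closed under both maps $A_0$ and $A_1$ simultaneously, which is why one must take $V$ to be the \emph{entire} symbol set appearing in $t$ rather than attempting to localize around a single node. Once this is noted, the construction in $(iii) \Rightarrow (i)$ is the obvious "greedy descent" tree using the two choice functions $\phi_0, \phi_1$, and the reformulation $(ii) \Leftrightarrow (iii)$ is transparent from the graph-matrix dictionary.
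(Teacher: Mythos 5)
Your proof is correct and takes essentially the same approach as the paper's: one direction extracts the finite set of symbols occurring in a tree of $\mathsf{X}_{A_0,A_1}$ and observes that it induces essential subgraphs, the other direction greedily builds a tree using the no-zero-rows property, and the equivalence (ii) $\Leftrightarrow$ (iii) is treated as routine bookkeeping in both. If anything, your version is slightly cleaner: taking $V$ to be the set of \emph{all} symbols appearing in $t$ avoids the paper's pigeonhole argument at a finite depth $j \leq m$, and your recursive construction via the choice functions $\phi_0, \phi_1$ produces the infinite tree directly, where the paper instead builds $n$-blocks for every $n$ and leaves the passage to an infinite tree implicit.
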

\begin{proof}
Without loss of generality, we may assume that the adjacency matrices $A_0$ and $A_1$ are both indexed by the same order and $V_0 = V_1$ consists of $m$ vertices.

\noindent (i) $\Rightarrow$ (ii) Let $t \in \mathsf{X}_{A_0, A_1}$. Since $\mathcal{A}$ is finite, there exists a positive integer $j \leq m$ such that, for each $x \in \Sigma^j$, there exists $y \in \Sigma^i$ with $i < j$ such that $t_x = t_y$. Let $\mathcal{A}' = \{t_x: x \in \Sigma_j\}$, and let $A_{0; k}$ and $A_{1; k}$ be the matrices obtained from restricting $A_0$ and $A_1$ on $\mathcal{A}'$, respectively, where $k$ is the cardinality of $\mathcal{A}'$. It is seen that $A_{0; k}(t_x, t_{x0}) = A_0 (t_x, t_{x0}) = 1$ and $A_{1; k}(t_x, t_{x1}) = A_1 (t_x, t_{x1}) = 1$ for each $x \in \Sigma_{j-1}$; more explicitly, for each $a \in \mathcal{A}'$, there exist $a_0, a_1 \in \mathcal{A}'$ such that $A_{0; k}(a, a_0) = 1$ and $A_{1; k}(a, a_1) = 1$. This derives that $A_0$ and $A_1$ contain $k \times k$ submatrices which have no zero rows.

\noindent (ii) $\Rightarrow$ (i) Without loss of generality, we may assume that $k = m$. It follows from $A_0$ and $A_1$ containing no zero rows that, for each $a \in \mathcal{A}$, there exist $a_0, a_1 \in \mathcal{A}$ such that $A_0 (a, a_0) = 1$ and $A_1 (a, a_1) = 1$. Namely, for each $a \in \mathcal{A}$,  $u^{(2)} := (a, a_0, a_1) \in B_2(\mathsf{X}_{A_0, A_1})$ for some $a_0, a_1 \in \mathcal{A}$. Similarly, there exist $a_{00}, a_{01}, a_{10}, a_{11} \in \mathcal{A}$ such that $u^{(3)} \in B_3(\mathsf{X}_{A_0, A_1})$. Repeating the same process we can construct $u^{(n)} \in B_n(\mathsf{X}_{A_0, A_1})$ for any integer $n \geq 2$. Thus, $\mathsf{X}_{A_0, A_1} \neq \varnothing$.

Note that the demonstration of (ii) $\Leftrightarrow$ (iii) can be done straightforward. This completes the proof.
\end{proof}

Theorem \ref{thm:emptiness} indicates that the essential graphs are fundamental for the non-emptiness of tree-shifts of finite type. A vertex tree-shift $\mathsf{X}_{A_0, A_1}$ is called \emph{essential} if both $A_0$ and $A_1$ are essential. Following Theorem \ref{thm:emptiness}, Theorem \ref{thm:extensible} addresses that the extensibility problem (Problem \ref{prob:extend-problem}) is determined by essential vertex tree-shifts.

\begin{theorem}\label{thm:extensible}
Every local pattern of an essential vertex tree-shift is extensible.
\end{theorem}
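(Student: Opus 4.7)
The plan is to construct an extension of the given local pattern $u$ by filling in missing nodes one at a time, using essentiality to supply a valid symbol at each new position. By hypothesis, $u$ has prefix-closed finite support $S(u)$, and $u$ being a local pattern of $\mathsf{X}_{A_0,A_1}$ means that every internal transition satisfies $A_i(u_y, u_{yi}) = 1$ whenever $y, yi \in S(u)$.

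First I would define a tree $t \in \mathcal{A}^{\Sigma^*}$ by recursion on $|x|$. Set $t_x = u_x$ for every $x \in S(u)$. For $x = yi \notin S(u)$ with $i \in \Sigma$, prefix-closedness of $S(u)$ together with the inductive hypothesis guarantees that $t_y$ is already assigned; since $A_i$ is essential, the row of $A_i$ indexed by $t_y$ contains at least one entry equal to $1$, so I pick any $a \in \mathcal{A}$ with $A_i(t_y, a) = 1$ and set $t_x = a$. Processing nodes in order of increasing $|x|$ makes the construction unambiguous and total, and independence of the choices at distinct branches poses no consistency issue.

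By construction $A_0(t_x, t_{x0}) = 1$ and $A_1(t_x, t_{x1}) = 1$ for every $x \in \Sigma^*$, so $t \in \mathsf{X}_{A_0, A_1}$; moreover $t|_{S(u)} = u$, so $u$ is accepted by $t$ at the root. By the remark preceding Problem \ref{prob:extend-problem}, this means exactly that $u$ is extensible. No substantial obstacle arises: each newly appended child has only one transition to satisfy, and essentiality of the corresponding $A_i$ always supplies a valid choice. The argument is essentially a ``downward'' version of the recursion used in implication (ii) $\Rightarrow$ (i) of Theorem \ref{thm:emptiness}, with the initial layers of the tree prescribed by $u$ rather than chosen freely.
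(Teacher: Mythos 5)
Your proof is correct and is essentially the paper's own argument: the paper omits the details, stating only that the result ``follows immediately from the definition of essential vertex tree-shift and analogous discussion to the proof of Theorem \ref{thm:emptiness},'' and your level-by-level filling of the tree below $S(u)$ using the no-zero-rows property of $A_0, A_1$ is exactly that analogous discussion, i.e., the (ii) $\Rightarrow$ (i) recursion of Theorem \ref{thm:emptiness} seeded with $u$ instead of a single symbol.
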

\begin{proof}
The desired result follows immediately from the definition of essential vertex tree-shift and analogous discussion to the proof of Theorem \ref{thm:emptiness}, thus it is omitted.
\end{proof}

Theorems \ref{thm:emptiness} and \ref{thm:extensible} reveal that, similar to the well-known results in one-dimensional shift spaces, essential graphs (matrices) declaim the nonemptiness and extensibility of TSFTs. In \cite{BC-2015}, it is seen that each irreducible TSFT has dense periodic points; such a result also holds for one-dimensional SFT. It comes to our mind whether every TSFT contains a periodic point since every one-dimensional SFT has at least one periodic point while there exists an aperiodic two-dimensional SFT (cf.~\cite{CulikII-DM1996, Kari-DM1996}). Theorem \ref{thm:aperiodic-TSFT} infers that there exists an aperiodic TSFT like multidimensional SFTs do.

\begin{theorem}\label{thm:aperiodic-TSFT}
There is an aperiodic tree-shift of finite type.
\end{theorem}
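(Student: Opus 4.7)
The plan is to exhibit an explicit essential vertex tree-shift on the two-letter alphabet $\mathcal{A} = \{a, b\}$ defined by
\begin{equation*}
A_0 = \begin{pmatrix} 0 & 1 \\ 1 & 0 \end{pmatrix}, \qquad A_1 = \begin{pmatrix} 1 & 0 \\ 0 & 1 \end{pmatrix},
\end{equation*}
so that a left child must carry the opposite symbol from its parent while a right child must carry the same symbol. Both matrices are essential, so Theorem \ref{thm:emptiness} immediately gives $\mathsf{X}_{A_0, A_1} \neq \varnothing$; the remaining task is to show no tree in this shift is periodic.

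First, a direct induction on $|x|$ using the transition rules establishes the rigid structural identity
\begin{equation*}
t_x = t_\epsilon + z(x) \pmod 2 \quad \text{for every } t \in \mathsf{X}_{A_0, A_1} \text{ and every } x \in \Sigma^*,
\end{equation*}
where $z(x)$ is the number of $0$s in $x$ (identifying $\mathcal{A}$ with $\mathbb{Z}/2\mathbb{Z}$ via $a \mapsto 0$, $b \mapsto 1$). Comparing $(\sigma_x t)_y = t_\epsilon + z(x) + z(y)$ with $t_y = t_\epsilon + z(y)$ yields the equivalence
\begin{equation*}
\sigma_x t = t \iff z(x) \equiv 0 \pmod 2.
\end{equation*}
Consequently, if some $t$ were periodic in the nontrivial sense of Definition \ref{def:periodic-tree} (i.e., with witnessing CPC $P \neq \{\epsilon\}$), then every $x \in P$ would satisfy $z(x) \equiv 0 \pmod 2$.

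The combinatorial heart of the proof is then the claim that no complete prefix code $P \neq \{\epsilon\}$ can have $z(x) \equiv 0 \pmod 2$ for every $x \in P$. To prove it, I would introduce the classes $\mathcal{C}_0, \mathcal{C}_1$ of CPCs all of whose words satisfy $z \equiv 0$ and $z \equiv 1 \pmod 2$, respectively. Any nontrivial CPC $P$ decomposes uniquely as $P = 0 P_0 \cup 1 P_1$, where $P_i = \{x : ix \in P\}$ are themselves nonempty CPCs (since $0^\infty$ and $1^\infty$ must each have a prefix in $P$) of strictly smaller total word-length $\sum_{x}|x|$. A one-line parity check gives the implications $P \in \mathcal{C}_j \Rightarrow P_0 \in \mathcal{C}_{1-j}$ and $P_1 \in \mathcal{C}_j$. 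I would then argue by induction on total length: the only trivial CPC $\{\epsilon\}$ lies in $\mathcal{C}_0$ but not $\mathcal{C}_1$, so a minimal hypothetical member of $\mathcal{C}_1$ would force a strictly smaller member of $\mathcal{C}_1$ via its $P_1$-component, contradiction. Hence $\mathcal{C}_1$ is empty, and then no nontrivial member of $\mathcal{C}_0$ can exist either, since its $P_0$-component would be forced into the empty class $\mathcal{C}_1$.

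The main obstacle is ensuring the induction closes, since the recursion $P \in \mathcal{C}_j \Rightarrow P_1 \in \mathcal{C}_j$ is self-referential in the residue $j$. Choosing total word-length rather than depth as the induction quantity handles this, because the decomposition strips one symbol from every element of $P$ so the total length of $P_1$ is strictly less than that of $P$. Once the claim is secured, aperiodicity of $\mathsf{X}_{A_0, A_1}$ is immediate, and all remaining steps are mechanical verifications from the explicit form of $A_0$ and $A_1$.
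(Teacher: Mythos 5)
Your proposal is correct, and it builds on exactly the same example as the paper (the vertex tree-shift with $A_0 = \left(\begin{smallmatrix} 0 & 1 \\ 1 & 0 \end{smallmatrix}\right)$, $A_1 = \left(\begin{smallmatrix} 1 & 0 \\ 0 & 1 \end{smallmatrix}\right)$), but your aperiodicity argument takes a genuinely different route. The paper argues locally and very briefly: any nontrivial complete prefix code $P$ must contain a sibling pair $w0, w1$ (take a word of maximal length in $P$ and flip its last letter; prefix-freeness and completeness force the sibling into $P$), and then periodicity would give $t_{w0} = t_{\epsilon} = t_{w1}$, contradicting the structural fact $t_{w0} \neq t_{w1}$ that the matrices impose at every node. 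You instead solve the shift completely, establishing $t_x = t_{\epsilon} + z(x) \pmod 2$, characterize the fixing shifts by $\sigma_x t = t \iff z(x) \equiv 0 \pmod 2$, and then prove a standalone combinatorial lemma---no nontrivial CPC consists entirely of words with an even number of $0$s---by induction on total word-length through the decomposition $P = 0P_0 \cup 1P_1$. Your induction is sound (the choice of total length rather than depth does make it close, and the parity bookkeeping $P \in \mathcal{C}_j \Rightarrow P_0 \in \mathcal{C}_{1-j}$, $P_1 \in \mathcal{C}_j$ is right), so the proof works; it is heavier than the paper's but buys more, namely an exact description of which shift maps fix which trees and a reusable fact about parity on CPCs. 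Note that the paper's sibling observation immediately implies your lemma---the siblings $w0$ and $w1$ have $z$-values of opposite parity, so one is odd---which would let you bypass the induction entirely. One point in your favor: you explicitly exclude the trivial CPC $\{\epsilon\}$, which is necessary for the statement to have content (every tree is fixed by $\sigma_{\epsilon}$ under a literal reading of Definition \ref{def:periodic-tree}), whereas the paper's proof silently assumes this when it asserts the existence of the sibling pair.
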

\begin{proof}
Let $\mathcal{A} = \Sigma = \{0, 1\}$. Define
$$
A_0 = \begin{pmatrix}
0 & 1 \\
1 & 0
\end{pmatrix}
\quad \text{and} \quad
A_1 = \begin{pmatrix}
1 & 0 \\
0 & 1
\end{pmatrix}.
$$
It is seen that $X = \mathsf{X}_{A_0, A_1}$ is an aperiodic tree-shift of finite type. Indeed, if $t \in X$ is periodic, then there exists a CPC $P$ such that $\sigma_x t = t$ for all $x \in P$. The construction of $X$ asserts that $t_{w0} \neq t_{w1}$ for every $w \in \Sigma^*$, which derives a contradiction since $P$ is a CPC infers that there exists $w \in \Sigma^*$ such that $w0, w1 \in P$.
\end{proof}

The existence of aperiodic TSFTs illustrates that the tree-shifts exhibit different dynamics from one-dimensional shift spaces. One of the main differences between one-dimensional and multidimensional shift spaces is mixing condition. While there is only one mixing condition for one-dimensional shift spaces, there are several mixing conditions for multidimensional cases; for example, block gluing and strongly irreducible, and etc. The following definition addresses some mixing conditions for the tree-shifts. We remark that a block gluing tree-shift is called a mixing tree-shift in \cite{BC-2015} with a little modification, which is demonstrated as the sufficient condition for the denseness of periodic points. The reader is referred to \cite{BC-2015, BPS-TAMS2010} for more details.

Let $u$ and $v$ be two patterns and let $P$ be a complete prefix code. We say that $u$ and $v$ are connected through $P$ (or $P$ connects $u$ and $v$) if there exists $t \in X$ such that $t|_{S(u)} = u$ and $t|_{wxS(v)} = v$ for every leaf $w$ of $u$ and $x \in P$; herein, $w \in \Sigma^*$ is called a \emph{leaf} of $u$ if $u \in S(u)$ and $ui \notin S(u)$ for each $i \in \Sigma$. We denote the collection of leaves of a finite pattern $u$ by $\mathcal{L}(u)$.

\begin{definition}
A tree-shift is called
\begin{enumerate}[\bf (a)]
\item \emph{topological mixing} (TM) if for any two finite prefix-closed sets $L_1, L_2$ there exists a complete prefix code $P$ such that for any $t^{(1)}, t^{(2)} \in X$ there is a $t \in X$ satisfies $t|_{L_1} = t^{(1)}|_{L_1}$ and $(\sigma_{L_1 x} t)|_{L_2} = t^{(2)}|_{L_2}$ for each $x \in P$;
\item \emph{block gluing} (BG) if there exists a complete prefix code $P$ that connects any two $n$-blocks;
\item \emph{uniformly block gluing} (UBG) if there exists a complete prefix code $P = \Sigma^k$ that connects any two $n$-blocks;
\item \emph{strongly irreducible} (SI) if there exists a complete prefix code $P$ that connects any two patterns;
\item \emph{uniformly strongly irreducible} (USI) if there exists a complete prefix code $P = \Sigma^k$ that connects any two patterns.
\end{enumerate}
\end{definition}

\begin{example}\label{eg:even-shift-USI}
Let $\mathcal{A} = \Sigma = \{0, 1\}$. Let $\mathcal{F} \subset B_2(\mathcal{A}^{\Sigma^*})$ consist of two-blocks $u$ satisfying $u_0 \neq u_1$ or $u_{\epsilon} = u_0 = u_1 = 1$. The TSFT $X = \mathsf{X}_{\mathcal{F}}$ is called \emph{simplified golden-mean tree-shift}; that is, $X$ is generated by
$$
\{(0, 0, 0), (0, 1, 1), (1, 0, 0)\}.
$$
The \emph{even tree-shift} $Y \subset \mathcal{A}^{\Sigma^*}$ is defined as follows.
\begin{enumerate}[\bf (1)]
\item There are even number of $1$'s between two consecutive $0$'s on any path.
\item Any two paths starting at a same node and ending at nodes labeled by $0$ have the same number of $1$'s modulus $2$.
\end{enumerate}
It is easily seen that $Y$ is not a TSFT. Define $\Phi: B_2(X) \to \mathcal{A}$ as
$$
\Phi(u) = \left\{
\begin{aligned}
& 0, && u=(0, 0, 0);\\
& 1, && \hbox{otherwise.}
\end{aligned}
\right.
$$
A careful examination demonstrates that $Y = \phi(X)$, where $\phi = \Phi_{\infty}$; this derives that $Y$ is a sofic tree-shift. Furthermore, it can be verified that $X$ is USI with the complete prefix code $\Sigma^2$ and $Y$ is USI with the complete prefix code $\Sigma^4$. The examination of $X$ being USI is straightforward, thus it is omitted.

Given two patterns $u$ and $v$ which are accepted by $Y$, for each $\ell \in \mathcal{L}(u)$, we divide the discussion into several cases.

\noindent \textbf{Case 1.} $u_{\ell} = 0$ and $v_{\epsilon} = 0$. In this case, $u_{\ell}$ and $v_{\epsilon}$ can be connected through the four-block $\alpha$ satisfying $\alpha_w = 0$ for $w \in \Sigma_4$. More precisely, the pattern $\mu$ satisfying $S(\mu) = S(u) \bigcup \ell \Sigma_4 \bigcup \ell \Sigma^4 S(v)$ and $\mu|_{S(u)} = u, \mu|_{\ell \Sigma_4} = w$, and $\mu|_{\ell \Sigma^4 S(v)} = v$ is accepted by $Y$.\footnote{We remark that, for the simplicity of discussion and notation, $\alpha_{\epsilon}$ coincides with $u_{\ell}$ while $\alpha_w$ is concatenated with $v$ for each $w \in \Sigma^4$.}

\noindent \textbf{Case 2.} $u_{\ell} = 0$, $v_{\epsilon} = 1$. There are two subcases.

\noindent \textbf{Subcase 2-1.} $\min \{|w|: v_w = 0\}$ is even; in other words, there are even number of $1$'s before the first node in $S(v)$ labeled $0$. Let $\alpha \in B_4(Y)$ be the same as discussed in Case 1. It follows that $u_{\ell}$ and $v$ are connected through $\alpha$.

\noindent \textbf{Subcase 2-2.} $\min \{|w|: v_w = 0\}$ is odd. Pick a four-block $\alpha$ which satisfies $\alpha_w = 1$ if and only if $|w| = 3$. Then $u_{\ell}$ and $v$ are connected through $\alpha$.

\noindent \textbf{Case 3.} $u_{\ell} = 1$, $v_{\epsilon} = 0$. Case 3 is divided into two subcases.

\noindent \textbf{Subcase 3-1.} $\ell - \max \{|w|: w \prec \ell, u_w = 0\}$ is even; in other words, there are even number of $1$'s labeled in a path terminated at $\ell$. The selection of $\alpha$ is the same as the one constructed in Case 1.

\noindent \textbf{Subcase 3-2.} $u_{\ell} = 1$, $v_{\epsilon} = 0$, and $\ell - \max \{|w|: w \prec \ell, u_w = 0\}$ is odd. Let $\alpha$ be the four-block satisfying $\alpha_w = 1$ for each $w \in \{\epsilon, 0, 1\}$. Then $\alpha$ connects $u_{\ell}$ and $v$.

\noindent \textbf{Case 4.} $u_{\ell} = 1$, $v_{\epsilon} = 1$. Similar to the discussion above, there are four subcases in Case 4.

\noindent \textbf{Subcase 4-1.} Both $\min \{|w|: v_w = 0\}$ and $\ell - \max \{|w|: w \prec \ell, u_w = 0\}$ are even. The four-block $\alpha$ is the same as considered in Case 1 except $\alpha_{\epsilon} = 1$.

\noindent \textbf{Subcase 4-2.} $\min \{|w|: v_w = 0\}$ is even and $\ell - \max \{|w|: w \prec \ell, u_w = 0\}$ is odd. The four-block $\alpha$ is the same as considered in Case 3-2.

\noindent \textbf{Subcase 4-3.} $\min \{|w|: v_w = 0\}$ is odd and $\ell - \max \{|w|: w \prec \ell, u_w = 0\}$ is even. The four-block $\alpha$ is the same as considered in Case 2-2 except $\alpha_{\epsilon} = 1$.

\noindent \textbf{Subcase 4-4.} Both $\min \{|w|: v_w = 0\}$ and $\ell - \max \{|w|: w \prec \ell, u_w = 0\}$ are odd. Pick a four-block $\alpha$ which satisfies $\alpha_w = 0$ if and only if $|w| = 2$. Then $u_{\ell}$ and $v$ are connected through $\alpha$.

The investigation above asserts that $Y$ is USI with complete prefix code $P = \Sigma^4$.
\end{example}

\begin{proposition}\label{prop:mixing-relation-diagram}
The following diagram holds for any tree-shift.
$$
\begin{array}{ccccc}
USI & \Longrightarrow & SI &  &  \\
\Downarrow &  & \Downarrow &  &  \\
UBG & \Longrightarrow & BG & \Longrightarrow & TM
\end{array}
$$
\end{proposition}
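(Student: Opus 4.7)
Four of the five implications are formal and I would dispatch them in a single opening paragraph. Both $USI \Rightarrow SI$ and $UBG \Rightarrow BG$ hold because $\Sigma^k$ is itself a complete prefix code, so the witness supplied by the uniform hypothesis already witnesses the non-uniform conclusion. Both $USI \Rightarrow UBG$ and $SI \Rightarrow BG$ hold because an $n$-block is a pattern with support $\Sigma_{n-1}$, so the connection property quantified over arbitrary patterns specializes directly to pairs of $n$-blocks with the same CPC witness.

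The substantive implication is $BG \Rightarrow TM$, which I plan to argue as follows. Given finite prefix-closed sets $L_1, L_2 \subseteq \Sigma^*$, fix $n$ with $L_1 \cup L_2 \subseteq \Sigma_{n-1}$. For arbitrary $t^{(1)}, t^{(2)} \in X$, form the $n$-blocks $\tilde{u} = t^{(1)}|_{\Sigma_{n-1}}$ and $\tilde{v} = t^{(2)}|_{\Sigma_{n-1}}$. The $BG$ hypothesis yields a complete prefix code $Q$ (depending on $n$ but not on the trees chosen) and a tree $t \in X$ with $t|_{\Sigma_{n-1}} = \tilde{u}$ and $t|_{wy\Sigma_{n-1}} = \tilde{v}$ for every $w \in \Sigma^{n-1}$ and every $y \in Q$. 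Restricting these equalities to the subsets $L_1, L_2 \subseteq \Sigma_{n-1}$ immediately yields $t|_{L_1} = t^{(1)}|_{L_1}$ and $(\sigma_{wy} t)|_{L_2} = t^{(2)}|_{L_2}$ for all such $w$ and $y$. The CPC $P$ demanded by the $TM$ definition is then assembled from $Q$ by prepending a uniform code that adjusts for the depth of the leaves of $L_1$.

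The main technical point -- and the only real obstacle -- is producing a \emph{single} complete prefix code $P$ valid for every leaf $\ell$ of $L_1$ at once: the natural per-leaf continuation sets $\Sigma^{n-1-|\ell|} \cdot Q$ live at different depths when the leaves of $L_1$ vary in depth, so they cannot simply be combined. I plan to exploit the fact that the $TM$ condition only constrains $t$ on $L_1$ and to prescribe $t$ on the larger set $\Sigma_{n-1}$ via $t^{(1)}$; this makes every relevant leaf sit at the common depth $n-1$ and reduces the question to the uniform choice $P = Q$. A short combinatorial check then confirms that $P$ is a complete prefix code -- in general, the concatenation of two CPCs is a CPC -- and the two displayed equalities above become precisely the conditions in the $TM$ definition. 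The remainder of the argument is routine bookkeeping.
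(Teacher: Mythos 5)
Your treatment of the four formal implications ($USI \Rightarrow SI$, $UBG \Rightarrow BG$, $USI \Rightarrow UBG$, $SI \Rightarrow BG$) is correct, and it is all the paper itself offers: the paper's entire proof of this proposition is the remark that the diagram follows immediately from the definitions. So the only place where your proposal goes beyond the paper is $BG \Rightarrow TM$, and that is where the problem lies.

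The gap sits exactly at the point you yourself flag as the main technical obstacle, and your proposed fix does not close it. The $TM$ condition quantifies over the leaves of $L_1$: the copies of $t^{(2)}|_{L_2}$ must be rooted at $\ell x$ for every leaf $\ell$ of $L_1$ and every $x$ in a \emph{single} CPC $P$ depending only on $(L_1,L_2)$. Prescribing $t$ on the larger set $\Sigma_{n-1}$ via $t^{(1)}$ does not relocate those leaves -- they are determined by $L_1$ alone, not by which pattern you choose to realize around them. What your construction actually verifies is the $TM$ condition for the pair $(\Sigma_{n-1}, L_2)$, whose leaves all sit at depth $n-1$; it does not verify it for $(L_1, L_2)$. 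Worse, the defect cannot be repaired within your construction: the single $BG$ witness $t$ only guarantees copies of $t^{(2)}|_{L_2}$ at the positions $\Sigma^{n-1}Q$, so a CPC $P$ serving two leaves at depths $d < d'$ would have to satisfy $P \subseteq \Sigma^{n-1-d}Q \cap \Sigma^{n-1-d'}Q$, and this intersection contains no CPC at all. Indeed, any CPC $P$ must contain some word $0^m$ (a prefix of a long string of $0$'s); if $0^m \in \Sigma^{a}Q \cap \Sigma^{b}Q$ with $a \neq b$, then both $0^{m-a}$ and $0^{m-b}$ lie in $Q$, and one is a proper prefix of the other, contradicting that $Q$ is a prefix code. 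So already for $L_1 = \{\epsilon, 0, 1, 00, 01\}$, whose leaves $1, 00, 01$ occupy two depths, your $P$ does not exist. Establishing $BG \Rightarrow TM$ for arbitrary prefix-closed $L_1$ needs a genuinely different device -- e.g.\ iterated gluings at several heights calibrated to the distinct leaf depths, spaced far enough apart to preserve the earlier copies -- and as written your argument does not supply it.
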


Proposition \ref{prop:mixing-relation-diagram} follows immediately from the definitions of the above five types of mixing properties. It is of interest that if any of the implications can be reversed. Furthermore, it is of more interest that if any inference of the back-diagonal parts exists. Theorem \ref{thm:mixing-relation-TSFT} reveals that $X$ being BG (respectively UBG) is equivalent to $X$ being SI (respectively USI) whenever $X$ is a TSFT.

\begin{theorem}\label{thm:mixing-relation-TSFT}
Suppose $X$ is a tree-shift of finite type. Then $X$ is uniformly strongly irreducible if and only if $X$ is uniformly block gluing, and $X$ is strongly irreducible if and only if $X$ is block gluing.
\end{theorem}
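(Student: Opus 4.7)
The plan is to note that the forward implications (USI $\Rightarrow$ UBG and SI $\Rightarrow$ BG) already follow from Proposition \ref{prop:mixing-relation-diagram}, since every $n$-block is itself a pattern. The content of the theorem therefore lies in the converse directions under the TSFT assumption. I would prove both BG $\Rightarrow$ SI and UBG $\Rightarrow$ USI by the same construction, which happens to preserve the uniform shape $P = \Sigma^k$ whenever the input CPC has that shape. The driving idea is the following Markov principle: in a vertex tree-shift, admissibility is a purely local constraint, so two admissible subtrees whose root labels agree may be spliced at that node. Consequently, whether a CPC $P$ connects $u$ and $v$ reduces to whether, for each leaf $\ell$ of $u$, one can interpolate from the label $u_\ell$ down to the label $v_\epsilon$ along $P$.

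First I would reduce to the Markov setting. By Proposition \ref{prop:TSFT-is-vertex-shift-is-Markov} we may assume $X = \mathsf{X}_{A_0, A_1}$ is a vertex tree-shift, and, after discarding labels that appear in no tree of $X$ (cf.~Theorem \ref{thm:emptiness}), we may also assume that $A_0, A_1$ are essential, so that every label extends to an infinite tree. Next, I would apply the hypothesis BG (respectively UBG) to the special case $n = 1$: since each single vertex labeled $a \in \mathcal{A}$ is a $1$-block of $X$, the hypothesis yields a CPC $P$ (respectively of the form $\Sigma^k$) such that for every pair $a, b \in \mathcal{A}$ there is $s_{a,b} \in X$ with $(s_{a,b})_\epsilon = a$ and $(s_{a,b})_x = b$ for every $x \in P$.

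Given arbitrary patterns $u, v \in B(X)$, I would build the connecting tree $t$ piecewise: (i) set $t|_{S(u)} = u$; (ii) at each leaf $\ell$ of $u$, let $t$ on the prefixes of $\ell P = \{\ell x : x \in P\}$ agree with the corresponding portion of $s_{u_\ell, v_\epsilon}$, which in particular forces $t_{\ell x} = v_\epsilon$; (iii) graft a copy of $v$ at each $\ell x$ by setting $t_{\ell x y} = v_y$ for $y \in S(v)$; and (iv) extend beyond the leaves of $v$ to a full infinite tree using the essentialness of $A_0, A_1$. Every adjacent transition of $t$ comes from either $u$, some $s_{u_\ell, v_\epsilon}$, some copy of $v$, or the essential extension, each of which is admissible in isolation; and every splicing occurs at a node whose label coincides on both sides. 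Hence $t \in X$ realizes the desired gluing through $P$, yielding SI (respectively USI) with the same CPC.

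The only delicate point is the grafting in step (iii), where the subtree of $s_{u_\ell, v_\epsilon}$ rooted at $x$ is overwritten by $v$. This is legitimate precisely because $(s_{u_\ell, v_\epsilon})_x = v_\epsilon$ matches the root label of $v$ and admissibility in a vertex tree-shift depends solely on adjacent-vertex transitions via $A_0$ and $A_1$. This local-to-global step would fail for sofic tree-shifts, whose defining constraints are non-local, which is consistent with the theorem being stated only for TSFTs.
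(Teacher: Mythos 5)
Your proof is correct, and its core is the same as the paper's: reduce to a vertex (one-step) tree-shift via Proposition \ref{prop:TSFT-is-vertex-shift-is-Markov}, then use the Markov property to splice admissible pieces at nodes whose labels agree, keeping the same CPC so that the uniform case (UBG $\Rightarrow$ USI with $P = \Sigma^k$) follows from the identical construction. The one genuine difference is how the gluing hypothesis is invoked, and your way buys something real. The paper applies UBG, for each leaf $\ell$ of $\omega$, to the pair consisting of the $1$-block $\omega_\ell$ and a block extension of $\omega'$, so the connecting block $\alpha^{(\ell)}$ arrives with the copies of $\omega'$ already attached and a single splice at $\ell$ finishes the argument; but this is a cross-height application (a $1$-block against a $q$-block), which the stated definition of block gluing (a CPC connecting ``any two $n$-blocks'') does not literally supply, and one cannot repair it by extending $\omega_\ell$ to a $q$-block, since the copies of $\omega'$ would then land at depth $q-1+k$, making the CPC depend on $\omega'$. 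Your version sidesteps this by using the hypothesis only for pairs of $1$-blocks (single labels $u_\ell$ and $v_\epsilon$) and performing a second Markov splice to graft $v$ at each $\ell x$, $x \in P$ --- legitimate precisely because $(s_{u_\ell, v_\epsilon})_x = v_\epsilon$ matches the root label of $v$; in effect your double splice proves the cross-height statement the paper takes for granted. The only imprecision is minor: besides extending below the leaves of $v$, your step (iv) must also fill in the children lying outside $S(u)$ (and outside $S(v)$ in each grafted copy) of non-leaf support nodes; essentialness, or grafting entire trees of $X$ instead of finite patterns, covers this exactly as you indicate.
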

\begin{proof}
It suffices to show that $X$ being UBG infers that $X$ is USI; the inference of BG implying SI can be obtained similarly. Without loss of generality, we may assume that $X$ is a one-step TSFT.

Suppose that $X$ is UBG. Let $k$ be a natural number such that, for any two blocks $u \in B_p(X)$ and $v \in B_q(X)$, there exists $t \in X$ with $t|_{S(u)} = u$ and $t|_{wS(v)} = v$ for each $w \in \Sigma^{k+p-1}$. For any two patterns $\omega$ and $\omega'$ which are accepted by $X$, we extend $\omega'$ to a block and still refer the block to $\omega'$ for the simplicity. For each $\ell \in \mathcal{L}(\omega)$, $X$ being UBG infers that there exists a block $\alpha^{(\ell)}$ accepted by $X$ satisfying $\alpha^{(\ell)}_{\epsilon} = \omega_{\ell}$ and $\alpha^{(\ell)}_w = \omega'$ for each $w \in \Sigma^k$. Since $X$ is a one-step TSFT, there exists a pattern $\varpi$ accepted by $X$ satisfying that $\varpi_{S(\omega)} = \omega$ and $\varpi_{\ell w S(\omega')} = \omega'$ for each $\ell \in \mathcal{L}(\omega), w \in \Sigma^k$. Namely, $X$ is USI. The proof is complete.
\end{proof}

It is known that each irreducible one-dimensional SFT exhibits positive topological entropy. For multidimensional shift spaces, Boyle \emph{et al.}~\cite{BPS-TAMS2010} demonstrates that each block gluing SFT has positive topological entropy. Theorem \ref{thm:BG-positive-entropy} reveals that the block gluing condition is sufficient for a TSFT being of positive topological entropy.

\begin{theorem}\label{thm:BG-positive-entropy}
Suppose that a tree-shift of finite type is block gluing. Then it exhibits positive topological entropy.
\end{theorem}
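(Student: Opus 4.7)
The plan is to combine the block gluing hypothesis with the vertex tree-shift reduction afforded by Proposition~\ref{prop:TSFT-is-vertex-shift-is-Markov} in order to exhibit a doubly exponentially growing family of $H$-blocks, from which the definition \eqref{eq:entropy-definition} of $h(X)$ immediately yields $h(X)>0$. So assume $X=\mathsf{X}_{A_0,A_1}$ is a vertex tree-shift with essential $A_0,A_1$, fix a CPC $P$ witnessing BG with $k=\max\{|x|:x\in P\}$, and assume $X$ is nontrivial (otherwise the conclusion is vacuous).

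The first step is to locate $n_0\ge 2$ and two distinct $n_0$-blocks $v_1,v_2\in B_{n_0}(X)$ sharing the same root symbol $a$. Nontrivial BG forces some row of $A_0$ or $A_1$ to contain at least two $1$'s: were both matrices row-functional, each symbol would uniquely determine its entire tree, and applying BG separately to the pairs $(u,u)$ and $(u,v)$ for two distinct $n$-blocks $u,v$ (which exist by nontriviality) would impose the incompatible equalities $t_{u_{\epsilon}}(\ell x)=u_{\epsilon}$ and $t_{u_{\epsilon}}(\ell x)=v_{\epsilon}$ at every slot $\ell x$. Such a branching row at a reachable vertex $a$ then produces two distinct $2$-blocks rooted at $a$, which we take as $v_1,v_2$ (so $n_0=2$ suffices).

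Next comes the recursive construction. By BG applied to $(v_1,v_1)$, there exists an admissible pattern $\tau$ supported on $\Sigma_{n_0-1+k}$ whose top $n_0$ levels equal $v_1$ and which carries symbol $a$ at every slot $\ell x$ with $\ell\in\Sigma^{n_0-1}$ and $x\in P$. Because $X$ is a vertex tree-shift, admissibility is local in parent-child pairs, so the sub-tree rooted at any slot may be replaced by any tree in $X$ whose root is $a$; in particular, at each slot we may independently plant either $v_1$ or $v_2$, since both share the root symbol $a$. Writing $M:=2^{n_0-1}\cdot|P|\ge 2$ for the number of slots produced by one $n_0$-block, this single step already yields at least $2^{M}$ admissible blocks of height at most $2n_0+k-1$. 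Iterating the same gluing beneath each newly planted $v_i$, after $L$ rounds one obtains at least $2^{M+M^2+\cdots+M^{L-1}}\ge 2^{M^{L-1}}$ admissible blocks of height $H\le L(n_0+k)$.

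Taking $\ln\circ\ln$ of this lower bound and dividing by $H$ then yields
\begin{equation*}
h(X)\;=\;\lim_{H\to\infty}\frac{\ln^2|B_H(X)|}{H}\;\ge\;\frac{\ln M}{n_0+k}\;>\;0,
\end{equation*}
which is the desired conclusion. The main obstacle is the independence claim of the previous paragraph, namely that the single gluing pattern $\tau$ obtained from $(v_1,v_1)$-gluing really does permit independent replacement by $v_2$-sub-trees at an arbitrary subset of slots. This is exactly where the vertex tree-shift hypothesis is indispensable: it localizes every transition constraint to parent-child pairs, so replacing entire sub-trees rooted at nodes carrying the common symbol $a$ preserves admissibility of the whole tree.
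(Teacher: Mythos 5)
Your proof is correct and takes essentially the same route as the paper's: block gluing supplies the gluing patterns, the one-step (vertex) structure of the TSFT permits independent regrafting of subtrees at the pairwise incomparable slots, the number of slots multiplies by at least $2$ with each round, and applying $\ln\circ\ln$ to the resulting doubly exponential lower bound on $|B_H(X)|$ yields $h(X)\ge \ln M/(n_0+k)>0$. The only substantive difference is bookkeeping: the paper plants single symbols at the slots, drawing its two-or-more choices per slot directly from $|\mathcal{A}|\ge 2$ (which is where nontriviality enters there), so it never needs your row-functional dichotomy producing two distinct $2$-blocks with a common root; incidentally, your arithmetic is tighter than the paper's, whose stated counts $\kappa^{2}\cdot\kappa^{2^{2}}\cdots\kappa^{2^{\ell}}$ and limiting value $\ln 2$ are slightly off (the subsequence limit there is $\ln 2/k$), though positivity is unaffected in both arguments.
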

\begin{proof}
Suppose that $X$ is a block gluing TSFT over $\mathcal{A}$ with $|\mathcal{A}| = \kappa$. Let $P$ be a CPC which connects any two blocks with $|P| = k$ for some $k \in \mathbb{N}$; recall that $|P| = \max \{|x|: x \in P\}$. It is seen that $|B_{k+1} (X)| \geq \kappa^2$. Indeed, for $u, v \in \mathcal{A}$, there exists $t \in X$ such that $t_{\epsilon} = u$ and $t_x = v$ for every $x \in P$. Since $X$ is shift invariant and is a TSFT, it follows that
$$
|B_{2 k + 1} (X)| \geq \kappa^2 \cdot \kappa^{4} = \kappa^6;
$$
inductively, we derive that
$$
B_{\ell k + 1} (X)| \geq \kappa^2 \cdot \kappa^{2^2} \cdot \cdots \cdot \kappa^{2^{\ell}} = \kappa^{2 (2^{\ell} - 1)}.
$$
Therefore,
$$
h(X) \geq \lim_{\ell \to \infty} \dfrac{\ln^2 \kappa^{2 (2^{\ell} - 1)}}{\ell k + 1} = \ln 2 > 0.
$$
This completes the proof.
\end{proof}

\begin{remark}
In the proof of Theorem \ref{thm:BG-positive-entropy}, it is seen that block gluing TSFTs not only exhibit positive but full topological entropy. Such a phenomenon indicates that a softer condition might be sufficient for yielding positive topological entropy. On the other hand, irreducibility is not sufficient for attending positive topological entropy. For example, let
$$
A_0 = A_1 = \begin{pmatrix}
0 & 1 \\
1 & 0
\end{pmatrix}.
$$
It comes immediately that $\mathsf{X}_{A_0, A_1}$ is an irreducible TSFT and $h(\mathsf{X}_{A_0, A_1}) = 0$.
\end{remark}

\begin{proposition}\label{prop:SI-not-UBG-not-SI}
A strongly irreducible tree-shift $X$ may not be uniformly block gluing even when $X$ is a tree-shift of finite type. On the other hand, a uniformly block gluing tree-shift may not be strongly irreducible in general.
\end{proposition}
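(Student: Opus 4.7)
My plan is to establish each non-implication through an explicit construction, since the statement is existential in character.

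For the first claim (SI $\not\Rightarrow$ UBG, even for TSFTs), Theorem \ref{thm:mixing-relation-TSFT} reduces the task, within the TSFT class, to exhibiting a vertex tree-shift $\mathsf{X}_{A_0,A_1}$ that is block gluing but not uniformly block gluing. The plan is to chase the matrix-product characterization: $P$ connects all pairs of $n$-blocks in $\mathsf{X}_{A_0,A_1}$ precisely when each product $M_x = A_{x_1}A_{x_2}\cdots A_{x_{|x|}}$ with $x \in P$ is strictly positive, while UBG requires such positivity uniformly on some $\Sigma^k$. I would choose $A_0$ and $A_1$ asymmetrically so that along different infinite direction sequences $d \in \Sigma^{\mathbb{N}}$ the first depth at which $M_{d_1\cdots d_m}$ becomes positive depends strongly on the mix of zeros and ones in $d$; a non-uniform CPC $P$ can then harvest these first-positive prefixes branch by branch, yielding BG, whereas any single $\Sigma^k$ will contain a worst-case word (for instance heavily biased toward direction $0$ or toward direction $1$, or alternating) whose product remains non-positive. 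Verification of BG is by exhibiting $P$ explicitly and checking each $M_x$ is positive; verification of UBG failure is by producing, for each $k$, a specific $x \in \Sigma^k$ with $M_x$ retaining a zero entry.

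For the second claim (UBG $\not\Rightarrow$ SI in general), Theorem \ref{thm:mixing-relation-TSFT} forces UBG and USI to coincide on TSFTs, and USI $\Rightarrow$ SI trivially, so the counterexample must lie outside the TSFT class. I would construct a sofic tree-shift $Y = \phi(X)$ as a factor of a suitably chosen TSFT $X$, with the block code $\Phi$ engineered so that uniform block gluing descends through $\phi$ at the level of rectangular $n$-blocks, while the factor erases local information in a way that imposes hidden-state dependencies on patterns with irregular (non-rectangular) support. Using the even tree-shift of Example \ref{eg:even-shift-USI} as a conceptual template, I would weaken its symmetry enough that general patterns record parity or memory data preventing any single CPC from connecting all pattern pairs, yet leave the rectangular-block behavior symmetric enough for a single $\Sigma^k$ to glue any two blocks.

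The main obstacle is the first construction. To keep BG alive, each $A_i$ must be primitive, because the CPC is forced to cover the infinite paths $0^\infty$ and $1^\infty$ and hence to contain some $0^{k_0}$ and $1^{k_1}$ with $A_i^{k_i}$ positive; once both $A_0$ and $A_1$ are primitive (hence irreducible, with no zero columns), one must still arrange that uniform positivity on $\Sigma^k$ never occurs. This is delicate because primitivity of the individual matrices tends to cascade into joint positivity at sufficient uniform depth, so the construction must carefully exploit direction-specific irregularity and non-commutativity of $A_0A_1$ vs. $A_1A_0$ to keep a persistent family of bad uniform words while still admitting a CPC that routes around them. I expect to need an alphabet of size at least three and a case-by-case analysis of mixed products $A_{x_1}\cdots A_{x_k}$.
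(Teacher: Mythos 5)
Your reduction of the first claim via Theorem \ref{thm:mixing-relation-TSFT} is sound, but the engine you propose to run it on---the matrix-product characterization---is false, and the construction it calls for is provably impossible. Positivity of each $M_x$, $x\in P$, is necessary but \emph{not} sufficient for $P$ to connect blocks: connection requires a single labeling of the finite tree spanned by $P$ (its prefix-closure) whose root reads $u_\ell$, whose leaves all read $v_\epsilon$, and which respects $A_0,A_1$ on all branches \emph{simultaneously}; path-by-path positivity does not provide this. Worse, your plan to defeat UBG by producing, for each $k$, some $x\in\Sigma^k$ with $M_x$ not strictly positive can never succeed once BG holds---and your own observations already show why. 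As you note, a CPC $P$ must contain words $0^{k_0}$ and $1^{k_1}$, and connecting all pairs of $1$-blocks forces $M_p(a,b)>0$ for every $p\in P$ and every pair of occurring symbols; in particular $A_0,A_1$ are primitive, hence have no zero columns, hence every Boolean product of them has no zero columns. But then any $x$ with $|x|\geq|P|$ factors as $x=px'$ with $p\in P$, and $M_x=M_pM_{x'}$ is strictly positive because $M_p>0$ and $M_{x'}$ has no zero column. So under BG \emph{every} sufficiently long product is strictly positive: there is no ``worst-case word'' at any uniform depth, and the BG/UBG distinction is invisible at the level of matrix products. What actually separates them is the simultaneity constraint across siblings, and that is exactly the paper's mechanism: Example \ref{eg:SI-not-UBG} takes $\mathcal{F}=\{(0,0,0),(1,1,1)\}$ (a TSFT with \emph{coupled} children, not a vertex tree-shift on the same alphabet), verifies SI directly with the CPC $\{0,10,11\}$, and kills UBG by parity---gluing the block $(0,1,0)$ to itself through any $\Sigma^k$ forces each level from depth $k+1$ down to the root to be monochromatic, contradicting $v_0\neq v_1$. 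Recoding this example via Proposition \ref{prop:TSFT-is-vertex-shift-is-Markov} gives a vertex tree-shift that is BG, is not UBG, and has all long products strictly positive, a direct counterexample to your criterion.

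For the second claim you correctly locate any example outside the TSFT class, but what you offer is an intention, not a construction: no tree-shift is exhibited, and no mechanism is given by which ``weakening the symmetry'' of the even tree-shift (which is USI, so \emph{all} of its gluing structure must be destroyed) could kill SI while preserving UBG. The paper's Example \ref{eg:UBG-not-SI} shows how elementary the example can be: the tree-shift of all trees that are constant on each level, defined by infinitely many forbidden patterns. It is UBG with the CPC $\Sigma^1$, since gluing rectangular blocks just concatenates level sequences; it is not SI, because gluing a suitable non-rectangular all-$0$ pattern (the paper uses one supported on a single path determined by a word of the candidate CPC) to the one-node pattern $1$ forces two nodes of equal depth to carry distinct labels. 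The essential ingredient is a long-range constraint tying together all nodes of a given level, and your sofic-factor sketch supplies no substitute for it.
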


Example \ref{eg:SI-not-UBG} illustrates a TSFT which is strongly irreducible but not uniformly block gluing. Combining with Theorem \ref{thm:mixing-relation-TSFT} provides an example, which distinguishes block gluing from uniformly block gluing. On the other hand, Example \ref{eg:UBG-not-SI} yields a tree-shift which is uniformly block gluing but not strongly irreducible. Such a novel phenomenon is not observed in multidimensional shift spaces.

\begin{example}[SI does not imply UBG]\label{eg:SI-not-UBG}
Let $\mathcal{A} = \Sigma = \{0, 1\}$ and let $\mathcal{F}$ consist of those two blocks $u$ with $u_{\epsilon} = u_0 = u_1$; more explicitly, $\mathcal{F} = \{(0, 0, 0), (1, 1, 1)\}$. It is seen that the tree-shift of finite type $\mathsf{X}_{\mathcal{F}}$ is strongly irreducible with complete prefix code $P = \{0, 10, 11\}$. To see that $\mathsf{X}_{\mathcal{F}}$ is not uniformly block gluing, we consider the two-block $u = (0, 1, 0) \in B_2(\mathsf{X}_{\mathcal{F}})$. Suppose that $\mathsf{X}_{\mathcal{F}}$ is uniformly block bluing with a complete prefix set $\Sigma^k$ for some $k \in \mathbb{N}$. Then there is a block $v \in B(\mathsf{X}_{\mathcal{F}})$ satisfies $v|_{\Sigma_1} = u$ and $v|_{w \Sigma_1} = u$ for each $w \in \Sigma^{k+1}$. In other words, $v_w = 0$ for each $w \in \Sigma^{k+1}$; this makes $v_w = 1$ for each $w \in \Sigma^k$. Repeating the process infers that
$$
v_w = \left\{
\begin{aligned}
&0, && |w| = k+1-2i, i \geq 0; \\
&1, && |w| = k - 2i, i \geq 0.
\end{aligned}
\right.
$$
Namely, $v_w = v_{w'}$ if $|w| = |w'| \leq k+1$, which contradicts to the fact that $v_0 = 1$ and $v_1 = 0$. Hence $\mathsf{X}_{\mathcal{F}}$ is not uniformly block gluing.
\end{example}

\begin{example}[UBG does not imply SI]\label{eg:UBG-not-SI}
To construct a tree-shift $X$ which is uniformly block gluing but not strongly irreducible, Theorem \ref{thm:mixing-relation-TSFT} asserts that $X$ can not be a TSFT. Let $\mathcal{A} = \Sigma = \{0, 1\}$ and let
$$
\mathcal{F}_n = \{u: S(u) = \Sigma_n, u_x \neq u_y \text{ for some } x \neq y \in \Sigma_n\}
$$
for $n \geq 0$, and let $\mathcal{F} = \bigcup\limits_{n \geq 0} \mathcal{F}_n$. For each $t \in \mathsf{X}_{\mathcal{F}}$, it comes that $t_x = t_y$ if $|x| = |y|$. It can be verified without difficulty that $\mathsf{X}_{\mathcal{F}}$ is uniformly block gluing with complete prefix code $\{0, 1\}$. However, $\mathsf{X}_{\mathcal{F}}$ is not strongly irreducible. Indeed, suppose that $\mathsf{X}_{\mathcal{F}}$ is strongly irreducible with a complete prefix code $P$. Let $w \in P$. Consider a pattern $u$ whose support is the collection of all subwords of $\overline{w}$, where $\overline{a} = 1 - a$ for $a \in \mathcal{A}$, and $u_x = 0$ for each $x \in S(u)$. Pick $v = 1$; then there is a $(k+1)$-block $\mu \in \mathsf{X}_{\mathcal{F}}$ satisfies $\mu_w = 1$ and $\mu_{\overline{w}} = 0$, which is forbidden for $\mathsf{X}_{\mathcal{F}}$. Therefore, $\mathsf{X}_{\mathcal{F}}$ is not strongly irreducible.
\end{example}

An immediate inference of Propositions \ref{prop:mixing-relation-diagram} and \ref{prop:SI-not-UBG-not-SI} is that, generically, a BG (respectively UBG) tree-shift may not be SI (respectively USI); this is illustrated in Corollary \ref{cor:BG-not-SI-UBG-not-USI}.

\begin{corollary}\label{cor:BG-not-SI-UBG-not-USI}
A block gluing (respectively uniformly block gluing) tree-shift may not be strongly irreducible (respectively uniformly strongly irreducible) in general.
\end{corollary}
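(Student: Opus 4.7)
The plan is to observe that Corollary \ref{cor:BG-not-SI-UBG-not-USI} is an immediate consequence of Example \ref{eg:UBG-not-SI} together with the implication diagram of Proposition \ref{prop:mixing-relation-diagram}, and that a single witness suffices for both halves of the statement. Concretely, the tree-shift $\mathsf{X}_{\mathcal{F}}$ built in Example \ref{eg:UBG-not-SI}, which forces $t_x = t_y$ whenever $|x| = |y|$, has already been shown to be uniformly block gluing (with complete prefix code $\{0,1\}$) and to fail strong irreducibility. I would take this $\mathsf{X}_{\mathcal{F}}$ as the common example for both assertions.

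First I would handle the BG-not-SI claim: by Proposition \ref{prop:mixing-relation-diagram}, UBG implies BG, so $\mathsf{X}_{\mathcal{F}}$ is block gluing; since it is not strongly irreducible, this settles the first part. For the UBG-not-USI claim, I would invoke the contrapositive of the implication $USI \Rightarrow SI$, also from Proposition \ref{prop:mixing-relation-diagram}: failure of SI forces failure of USI, so the very same $\mathsf{X}_{\mathcal{F}}$ is uniformly block gluing but not uniformly strongly irreducible.

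There is essentially no obstacle here, since the substantive work sits entirely in Example \ref{eg:UBG-not-SI}; the only mildly nontrivial point to articulate is why one example discharges both assertions (namely, because the diagram of Proposition \ref{prop:mixing-relation-diagram} lets negative information propagate upward from SI to USI and positive information propagate downward from UBG to BG). No new construction, verification of forbidden patterns, or prefix-code chasing is needed beyond what Example \ref{eg:UBG-not-SI} already supplies.
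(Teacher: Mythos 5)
Your proposal is correct and matches the paper's own reasoning: the paper likewise obtains Corollary \ref{cor:BG-not-SI-UBG-not-USI} as an immediate consequence of Proposition \ref{prop:mixing-relation-diagram} and Proposition \ref{prop:SI-not-UBG-not-SI}, whose second half is exactly the witness of Example \ref{eg:UBG-not-SI}. Your explicit articulation of how the single example discharges both halves (UBG pushes down to BG, while failure of SI pushes up to failure of USI) is precisely the intended argument.
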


\section{Conclusions}\label{sec:conclusion}

This paper investigates some fundamental properties of tree-shifts such as the emptiness problem, the extensibility problem, the existence of periodic points, and the sufficient condition of exhibiting positive topological entropy. It turns out the tree-shifts stand alone from one-dimensional and multidimensional shift spaces; this makes the tree-shifts an appropriate approach for elucidating multidimensional shift spaces. Table \ref{table:summary} summarizes the comparison of tree-shifts of finite type, one-dimensional and multidimensional shifts of finite type.

\begin{table}
\begin{center}
\begin{tabular}{c|ccc}
\rule[-1.5ex]{0pt}{2.5ex}  & $1$-d SFT & TSFT & $k$-d SFT \\ 
\hline
\rule[-1ex]{0pt}{4ex} (1) & Decidable & Decidable & Undecidable \\ 
\rule[-1ex]{0pt}{2.5ex} (2) & True & True & False \\ 
\rule[-1ex]{0pt}{2.5ex} (3) & True & False & False \\ 
\rule[-1ex]{0pt}{2.5ex} (4) & Irreducible & Irreducible & Block gluing \\ 
\rule[-1ex]{0pt}{2.5ex} (5) & Perron number & Perron number & Right-recursively-enumerable \\ 
\rule[-1ex]{0pt}{2.5ex} (6) & Irreducible & Block gluing & Block gluing \\ 
\end{tabular} 
\end{center}
\caption{Comparison of TSFTs, one-dimensional and multidimensional SFTs. \newline (1) Emptiness problem: For a given forbidden set $\mathcal{F}$, does there exist an algorithm determining $\mathsf{X}_{\mathcal{F}} \neq \varnothing$?
\newline (2) Extensibility problem: Does every local pattern extend to a global pattern?
\newline (3) Existence of periodic points: Does $\mathsf{X}_{\mathcal{F}}$ contain periodic points provided $\mathsf{X}_{\mathcal{F}} \neq \varnothing$?
\newline (4) Denseness of periodic points: Under what condition are the periodic points dense?
\newline (5) Topological entropy: What kind of algebraic properties does the topological entropy satisfy?
\newline (6) Positive topological entropy: Under what condition does $\mathsf{X}_{\mathcal{F}}$ exhibit positive topological entropy?}
\label{table:summary}
\end{table}

Remarkably, the denseness problem of periodic points remains to be open for $k$-dimensional SFTs when $k \geq 3$ even for strongly irreducible SFTs; meanwhile, periodic points are dense in block gluing tree-shifts and irreducible TSFTs (cf.~\cite{BPS-TAMS2010, BC-2015}). Furthermore, the topological entropy of each TSFT is a Perron number (\cite{ABC-2016}), and the topological entropy of a multidimensional SFT is known as right recursively enumerable (\cite{HM-AoM2010}).

Except for the above, this paper also investigates several conditions of mixing, say, topological mixing, block gluing, uniformly block gluing, strongly irreducible, and uniformly strongly irreducible. It comes that a TSFT is strongly irreducible (respectively uniformly strongly irreducible) if and only if it is block gluing (respectively uniformly block gluing). This is another difference between tree-shifts and multidimensional shift spaces. Generally, neither uniformly block gluing nor strongly irreducible tree-shifts imply one another. To sum up, the relations between these mixing conditions are revealed as follows.
$$
\begin{array}{ccccc}
USI & \Rightarrow & SI &  &  \\
\Downarrow & \rotatebox{45}{$\nLeftrightarrow$} & \Downarrow &  &  \\
UBG & \Rightarrow & BG & \Rightarrow & TM
\end{array}
$$
Nevertheless, the diagram reduces to
$$
UBG (=USI) \Rightarrow BG (=SI) \Rightarrow TM
$$
whenever we restrict the discussion to TSFTs.

One of the main results of this paper is proving that, for a TSFT, the block gluing condition is the sufficient condition for exhibiting positive topological entropy. Such a condition coincides with the one required in multidimensional SFTs; in the mean time, uniform filling property, instead of block gluing condition, asserts the entropy minimality (cf.~\cite{BPS-TAMS2010}). It is of interest that, for TSFTs, if blocking gluing condition implies entropy minimality. The related work is under preparation.

\bibliographystyle{amsplain}
\bibliography{../../grece}

\end{document}